\numberwithin{equation}{section}
\begin{document}
\input xy
\xyoption{all}

\renewcommand{\mod}{\operatorname{mod}\nolimits}
\newcommand{\proj}{\operatorname{proj}\nolimits}
\newcommand{\rad}{\operatorname{rad}\nolimits}
\newcommand{\soc}{\operatorname{soc}\nolimits}
\newcommand{\ind}{\operatorname{ind}\nolimits}
\newcommand{\Top}{\operatorname{top}\nolimits}
\newcommand{\ann}{\operatorname{Ann}\nolimits}
\newcommand{\id}{\operatorname{id}\nolimits}
\newcommand{\Mod}{\operatorname{Mod}\nolimits}
\newcommand{\End}{\operatorname{End}\nolimits}
\newcommand{\Ob}{\operatorname{Ob}\nolimits}
\newcommand{\Ht}{\operatorname{Ht}\nolimits}
\newcommand{\cone}{\operatorname{cone}\nolimits}
\newcommand{\rep}{\operatorname{rep}\nolimits}
\newcommand{\Ext}{\operatorname{Ext}\nolimits}
\newcommand{\Hom}{\operatorname{Hom}\nolimits}
\renewcommand{\Im}{\operatorname{Im}\nolimits}
\newcommand{\Ker}{\operatorname{Ker}\nolimits}
\newcommand{\Coker}{\operatorname{cok}\nolimits}
\renewcommand{\dim}{\operatorname{dim}\nolimits}
\newcommand{\Iso}{\operatorname{Iso}\nolimits}
\newcommand{\Ab}{{\operatorname{Ab}\nolimits}}
\newcommand{\Coim}{{\operatorname{Coim}\nolimits}}
\newcommand{\pd}{\operatorname{pd}\nolimits}
\newcommand{\sdim}{\operatorname{sdim}\nolimits}
\newcommand{\add}{\operatorname{add}\nolimits}
\newcommand{\cc}{{\mathcal C}}
\newcommand{\co}{{\mathcal O}}
\newcommand{\crc}{{\mathscr C}}
\newcommand{\crd}{{\mathscr D}}
\newcommand{\crm}{{\mathscr M}}
\newcommand{\coh}{\operatorname{coh}\nolimits}
\newcommand{\vect}{\operatorname{vect}\nolimits}
\newcommand{\cub}{\operatorname{cub}\nolimits}
\newcommand{\can}{\operatorname{can}\nolimits}
\newcommand{\rk}{\operatorname{rk}\nolimits}
\newcommand{\st}{[1]}
\newcommand{\X}{\mathbb{X}}
\newcommand{\N}{\mathbb{N}}
\newcommand{\ul}{\underline}
\newcommand{\vx}{\vec{x}}
\newcommand{\vy}{\vec{y}}
\newcommand{\vz}{\vec{z}}
\newcommand{\vc}{\vec{c}}
\newcommand{\vw}{\vec{w}}
\def\proof{\vspace{1mm}\noindent{\it Proof}\quad}

\newtheorem{theorem}{Theorem}[section]
\newtheorem{acknowledgement}[theorem]{Acknowledgement}
\newtheorem{algorithm}[theorem]{Algorithm}
\newtheorem{axiom}[theorem]{Axiom}
\newtheorem{case}[theorem]{Case}
\newtheorem{claim}[theorem]{Claim}
\newtheorem{conclusion}[theorem]{Conclusion}
\newtheorem{condition}[theorem]{Condition}
\newtheorem{conjecture}[theorem]{Conjecture}
\newtheorem{corollary}[theorem]{Corollary}
\newtheorem{criterion}[theorem]{Criterion}
\newtheorem{definition}[theorem]{Definition}
\newtheorem{example}[theorem]{Example}
\newtheorem{exercise}[theorem]{Exercise}
\newtheorem{lemma}[theorem]{Lemma}
\newtheorem{notation}[theorem]{Notation}
\newtheorem{proposition}[theorem]{Proposition}
\newtheorem{remark}[theorem]{Remark}
\newtheorem{solution}[theorem]{Solution}
\newtheorem{summary}[theorem]{Summary}
\newtheorem*{thma}{Theorem}
\newtheorem*{Riemann-Roch Formula}{Riemann-Roch Formula}
\newtheorem{question}[theorem]{Question}
\numberwithin{equation}{section}
\title{On tubular tilting objects in the stable category of vector bundles}      

\author[Chen]{Jianmin Chen$^\dag$}
\address{School of Mathematical Sciences, Xiamen University, Xiamen 361005, P.R.China}
\thanks{$^\dag$ Corresponding author}
\email{chenjianmin@xmu.edu.cn}

\author[Lin]{Yanan Lin}
\address{School of Mathematical Sciences, Xiamen University, Xiamen
361005, P.R.China} \email{ynlin@xmu.edu.cn}

\author[Ruan]{Shiquan Ruan}
\address{School of Mathematical Sciences, Xiamen University, Xiamen
361005, P.R.China} \email{sqruan@xmu.edu.cn}

\begin{abstract}
The present paper focuses on the study of the stable category of vector bundles for the weighted projective lines of weight triple. We find some important triangles
in this category and use them to construct tilting objects with tubular endomorphism algebras for the case of genus one via cluster tilting theory.    
\end{abstract}

\keywords{tilting sheaf, weighted projective line,
stable category, cluster category, tubular algebra}        

\subjclass[2010]{14A22, 14F05, 16G70, 16S99, 18E30}      

\maketitle

\section{Introduction}

Weighted projective lines, introduced by Geigle and Lenzing, establish a link between many mathematical subjects such as representation theory of
algebras \cite{GL}, automorphic
forms\cite{L}, and singularities\cite{KLM2}. Let $\mathbb{X}$ be a weighted projective line over an
algebraically closed field $k$. By \cite{GL}, the category of coherent sheaves on $\X$ is derived equivalent to the category of finite-dimensional modules over some canonical algebra, and the category of vector bundles on $\X$, as an additive category, is equivalent to the category of graded Cohen-Macaulay modules over its corresponding graded ring.

The present paper focuses on the study of a weighted projective line $\X$ of weight triple $(p_{1},p_{2},p_{3})$.
Kussin, Lenzing and Meltzer
\cite{KLM2} proved that the category  $\vect\X$ of vector bundles
on $\mathbb{X}$, under the distinguished exact structure, is a Frobenius category with the system $\mathcal{L}$ of all
line bundles as the system of all indecomposable
projective-injectives. Moreover, the attached stable category
$\underline{\vect}\X=\vect\X/[\mathcal{L}]$ is triangulated.
In particular, this triangulated category is closely related to the categories of finitely generated modules over Nakayama algebras,
the stable categories of graded maximal
Cohen-Macaulay modules and the singularity
categories of some graded rings.
So it is important and interesting to study the structure of this triangulated category, especially the triangles and tilting objects.

 The excellent reference \cite{KLM2} is devoted to understanding the structure of $\underline{\vect}\X$. In particular, Kussin-Lenzing-Meltzer found a triangle consisting of rank-two bundles
(see Section 2 for the notations)
\[ E_{L}\langle\vec{x}\rangle\to E_{L}\langle\vec{x}+\vec{x}_{i}\rangle\to
 E_{L}\langle\vec{x}-l_{i}\vec{x}_{i}\rangle((l_{i}+1)\vec{x}_{i})\to E_L\langle\vec{x}\rangle[1].
\]
 This triangle plays an important role in their construction of the tilting object \[T_{\cub}=\bigoplus\limits_{0\leq \vec{x}\leq 2\vec{\omega}+\vec{c}}E\langle\vec{x}\rangle,\] whose endomorphism algebra is $k\vec{A}_{p_{1}-1}\otimes k\vec{A}_{p_{2}-1}\otimes k\vec{A}_{p_{3}-1}$.
But there is still much unknown for this triangulated category.
The aim of this paper is to find more triangles and tilting objects.

Notice by \cite{O} that the stable category $\underline{\vect}\X$ is equivalent to the derived category of coherent sheaves on $\X$ if  $\mathbb{X}$ is of genus one, that is, $\X$ is of weight type $(2,4,4), (2,3,6)$ or $(3,3,3)$. This implies that there exists a tilting object in $\underline{\vect}\X$ such that its endomorphism algebra is a canonical algebra of tubular type. We aim to construct such a tilting object (called \emph{tubular tilting object}).

The main idea is to use cluster tilting theory. As we know that an advantage of cluster tilting theory over classical tilting theory
is that there is an important tool, named cluster mutation, in cluster categories. Thus the
usual procedure of going from a tilting object to another one by
exchanging just one indecomposable direct summand gets more regular. We will construct the desired tilting object in $\underline{\mbox{vect}}\mathbb{X}$ from a cluster tilting object in its cluster category.

We would like to emphasize that although there exists a tilting object $T_{\cub}$ in the stable category $\underline{\mbox{vect}}\mathbb{X}$,
it is not clear whether $T_{\cub}$ is a cluster tilting object in its cluster category. By studying the properties of tilting objects in the
stable category, we show that $T_{\cub}$ is a cluster tilting object for weight types $(2,4,4)$ and $(2,3,6)$, but not for weight type $(3,3,3)$. We construct a cluster tilting object for weight type $(3,3,3)$ as the original object for cluster mutation.

This paper is organized as follows: In Section 2, we recall some notions for later use. In Section 3, we present some triangles
in the stable category of vector bundles on a weighted projective line of weight triple.
The final section is devoted to studying tilting objects in the stable category. By using the cluster tilting mutations and triangles given in  Section 3,
we construct a tubular tilting object in the stable category of vector bundles for each weighted projective line of genus one and weight triple case by case.
In view of the work on \cite{CLR}, which constructed a tubular tilting object in $\underline{\mbox{vect}}\mathbb{X}$ of weight type $(2,2,2,2)$, we actually realize the construction of a tubular tilting object for all the weighted projective lines of genus one.

\section{Preliminaries}

In this section, we list some basic definitions concerning the weighted projective lines and cluster tilting theory. The properties we will use can be found in \cite{CLLR, GL, KLM2}.

\subsection{The category of coherent sheaves $\coh \X$}
Let $k$ be an algebraically closed field. A \emph{weighted
projective line} $\X$ over $k$ is specified by giving a collection
$\lambda=(\lambda_{1},\lambda_{2},\cdots, \lambda_{n}) $ of distinct
points in the projective line $\mathbb{P}^{1}(k)$, and a
\emph{weight sequence} $p=(p_{1},p_{2},\cdots, p_{n})$ of positive
integers. The associated rank one abelian group $\mathbb{L}$ has
generators $\vec{x}_{1}, \vec{x}_{2}, \cdots, \vec{x}_{n}$ with the
relations $p_{1}\vx_1=p_{2}\vx_2=\cdots=p_{t}\vx_n=:\vec{c}.$
Each element $\vec{x}\in \mathbb{L}$ can be
uniquely written in normal form
$$\vec{x}=\sum\limits_{i=1}^{n}l_{i}\vec{x}_{i}+l\vec{c},
\ \ \mbox{where} \ \ 0\leq  l_{i}< p_{i} \ \ \mbox{and} \ \ l\in
\mathbb{Z}.$$
The associated commutative algebra
\[S=k[X_{1},X_{2},\cdots,
X_{n}]/(f_{3},\cdots,f_{n}):= k[x_{1},x_{2}, \cdots, x_{n}],\] where
$f_{i}=X_{i}^{p_{i}}-X_{2}^{p_{2}}+\lambda_{i}X_{1}^{p_{1}},
i=3,\cdots, n,$   is $\mathbb{L}$-graded by setting
$\mbox{deg}(x_{i})=\vx_i.$

The category of coherent sheaves on $\X$ is the
quotient of the category of finitely generated $\mathbb{L}$-graded
$S$-modules over the Serre subcategory of finite length modules
\[\coh \X:=\mod^{\mathbb{L}}(S)/\mbox{mod}_{0}^{\mathbb{L}}(S). \]
The free module $S$ gives the structure sheaf $\co$, and each line
bundle is given by the grading shift $\co(\vec{x})$ for a uniquely
determined element $\vec{x}\in \mathbb{L}$. Moreover, there is a
natural isomorphism
\[\Hom(\co(\vec{x}), \co(\vec{y}))=S_{\vec{y}-\vec{x}}.
\]

The structure of $\coh \X$ was described in \cite{GL}. Especially, $\coh \X$ is a hereditary abelian
category with Serre duality of the form
\[D\Ext^1(X, Y)=\Hom(Y, X(\vec{\omega})),\] where $\vec{\omega}=(n-2)\vec{c}-\sum\limits_{i=1}^{n}\vec{x}_{i}$ is
called the \emph{dualizing element} of $\mathbb{L}$.

The Grothendieck group $K_0(\X)$ of $\coh\X$ was computed by \cite{GL}, and the definitions of homomorphism $\delta,$ \emph{determinant} map $\det$, \emph{rank} function
$\rk$, \emph{degree} function $\deg$, \emph{slope} function $\mu$ can also be found in \cite{GL}.

\subsection{Stable category of vector bundle $\underline{\vect}\X$}
We shall always assume that
$\mathbb{X}$ is a weighted projective line of weight triple in the rest of the paper. Denote by $\vect\X$ the full subcategory of $\coh\X$ formed by all
vector bundles. A sequence
\[0\rightarrow X^{\prime}\rightarrow X\rightarrow
X^{\prime\prime}\rightarrow 0\] in $\mbox{vect}\mathbb{X}$ is called
\emph{distinguished exact} if for each line bundle $L$ the induced
sequence
\[
0\rightarrow \Hom(L,X^{\prime})\rightarrow \Hom(L, X)\rightarrow
\Hom(L, X^{\prime\prime})\rightarrow 0\] is exact. The distinguished exact
sequences define a Frobenius exact structure on
$\vect\X$ such that the system of all line bundles is the system of
all indecomposable projective-injectives. By a general result of
\cite{H}, the related stable category
$\underline{\vect}\X=\vect\X/[\mathcal{L}]
$ is a triangulated category.
For simplicity of notations, in the rest of the paper we denote
the stable category $\underline{\vect}\X$ by $\crd$ and denote the
homomorphism space between $X$ and $Y$ in $\crd$ by $\crd(X, Y)$.

By \cite{KLM2},
$\mathscr{D}$ is a
$\Hom$-finite, homologically finite, Krull-Schmidt category with Serre duality:
$\mathscr{D}(X, Y[1])=D\mathscr{D}(Y, X(\vec{\omega}))
$ for any two objects $X$ and $Y$ in $\crd$.
For a line bundle $L$ and $0\leq \vx \leq 2\vec{\omega}+\vc=\sum\limits_{i=1}^{3}(p_{i}-2)\vec{x}_{i}$, the middle term of the non-split exact sequence
\[0\rightarrow L(\vec{\omega})\rightarrow \bullet \rightarrow
L(\vx)\rightarrow 0\] in $\mbox{vect}\mathbb{X}$ is unique up to isomorphism. We call it the \emph{extension bundle} determined by $(L, \vx)$,
and denote it by $E_{L}\langle \vx \rangle$. We simply denote by $E_{L}:=E_{L}\langle 0 \rangle$ and call it the \emph{Auslander bundle} associate with $L$,
denote by $E\langle \vx \rangle:=E_{\co}\langle \vx \rangle$. In particular, $E:=E_{\co}\langle 0 \rangle.$

Details about the structure of $\crd$, the injective hull, the projective cover, the suspension for extension bundles, and homomorphism spaces between extension bundles are given in \cite{KLM2}. Based on the work of \cite{KLM2}, Lenzing and the third-named author \cite{LR} further studied extension bundles in $\underline{\vect}\X$, and obtained the following basic property. For the convenience of the readers, we sketch the proof.

\begin{lemma}\label{lemma1} Assume $\vec{x},\vec{y}, \vec{z}\in \mathbb{L}, \vec{x}=\sum\limits_{i=1}^{3}l_{i}\vec{x}_{i}$ with $0\leq l_{i}\leq p_{i}-2$ for $i=1,2,3$. Then $E\langle\vec{x}\rangle=E\langle\vec{y}\rangle(\vec{z})$ if and only if one of the following
conditions holds
\begin{enumerate}
\item[-] $\vec{y}=\vec{x}$ and $\vec{z}=0$;
\item[-] $\vec{y}=l_{j}\vec{x}_{j}+\sum\limits_{i\neq j}(p_{i}-2-l_{i})\vec{x}_{i}$ and $\vec{z}=\sum\limits_{i\neq j}(l_{i}+1)\vec{x}_{i}-\vec{c}$ for some $1\leq j\leq3$.
\end{enumerate}
\end{lemma}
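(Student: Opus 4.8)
The statement compares extension bundles up to a line bundle twist. Since $E\langle\vx\rangle$ and $E\langle\vy\rangle(\vz)$ are indecomposable and non-projective in $\vect\X$, an isomorphism between them in $\crd$ is the same as one in $\coh\X$, so I would work entirely inside $\coh\X$, using $\Hom(\co(\vec a),\co(\vec b))=S_{\vec b-\vec a}$, Serre duality in the form $\Ext^1(\co(\vec a),\co(\vec b))\cong DS_{\vec a+\vec\omega-\vec b}$, the fact that $\dim_k S_{\vx}=1$ for $0\le\vx\le 2\vec\omega+\vc$, and the relations $p_i\vec x_i=\vc$ and $\vec\omega=\vc-\vec x_1-\vec x_2-\vec x_3$. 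I would prove the two implications separately; the invariant used throughout is the determinant, for which $\det E_L\langle\vec u\rangle=2\det L+\vec\omega+\vec u$.

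\emph{The ``if'' part.} The first case is trivial. Fix $1\le j\le3$ and let $(\vy,\vz)$ be as in the second case. Twisting the defining sequence of $E\langle\vy\rangle$ by $\vz$ gives a non-split sequence $0\to\co(\vec\omega+\vz)\to E\langle\vy\rangle(\vz)\to\co(\vy+\vz)\to0$, and a direct computation in $\mathbb L$ identifies its two ends as $\co(\vx-(l_j+1)\vec x_j)$ and $\co(\vec\omega+(l_j+1)\vec x_j)$. Put $F=E\langle\vy\rangle(\vz)$ and $\vec a=\vx-(l_j+1)\vec x_j$, so $\det F=\vx+\vec\omega$. Applying $\Hom(\co(\vec\omega),-)$ to $0\to\co(\vec a)\to F\to\co(\vec\omega+(l_j+1)\vec x_j)\to0$, one checks $S_{\vec a-\vec\omega}=0$ and $S_{2\vec\omega-\vec a}=0$ (both multidegrees have negative $\vc$-coefficient in normal form), whereas $\Hom(\co(\vec\omega),\co(\vec\omega+(l_j+1)\vec x_j))=S_{(l_j+1)\vec x_j}=k\,x_j^{\,l_j+1}$; hence $\Hom(\co(\vec\omega),F)$ is one-dimensional, spanned by a monomorphism $g$. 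A local computation at the exceptional point $\lambda_j$ (alternatively, the $\Hom$-space formulas of \cite{KLM2}) shows that $g$ is a sub-bundle inclusion; its cokernel is then torsion-free of rank one with $\det\Coker g=\det F-\vec\omega=\vx$, so $\Coker g\cong\co(\vx)$. Thus $F$ sits in an exact sequence $0\to\co(\vec\omega)\to F\to\co(\vx)\to0$, necessarily non-split as $F$ is indecomposable; since $\dim_k\Ext^1(\co(\vx),\co(\vec\omega))=\dim_k S_{\vx}=1$, the middle term of such a sequence is unique up to isomorphism, and therefore $F\cong E\langle\vx\rangle$.

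\emph{The ``only if'' part.} We may assume $0\le\vy\le2\vec\omega+\vc$, since otherwise $E\langle\vy\rangle$ is not defined. Comparing determinants in $E\langle\vx\rangle\cong E\langle\vy\rangle(\vz)$ gives $\vx+\vec\omega=\vy+\vec\omega+2\vz$, i.e.\ $\vx=\vy+2\vz$, so $\vy$ is determined by $\vz$ and only $\vz$ remains to be pinned down. An isomorphism $E\langle\vy\rangle(\vz)\xrightarrow{\sim}E\langle\vx\rangle$ carries the canonical sub-bundle $\co(\vz+\vec\omega)$ of the source onto a line sub-bundle of $E\langle\vx\rangle$ whose quotient is again a line bundle; hence it suffices to show that the only line sub-bundles of $E\langle\vx\rangle$ with line bundle quotient are $\co(\vec\omega)$ and $\co(\vx-(l_j+1)\vec x_j)$ for $j=1,2,3$. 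Granting this, $\vz+\vec\omega$ must be one of these four elements, and combining with $\vx=\vy+2\vz$ a short computation returns exactly the two cases of the lemma (using $\vx-(l_j+1)\vec x_j-\vec\omega=\sum_{i\ne j}(l_i+1)\vec x_i-\vc$). The enumeration of these sub-bundles is the heart of the matter: from the defining sequence of $E\langle\vx\rangle$ and the $\Hom$- and $\Ext$-formulas between line bundles and extension bundles in \cite{KLM2} one bounds the finitely many possible degrees of a sub-line-bundle and of its quotient, and then decides which of the surviving maps $E\langle\vx\rangle\to\co(\vec v)$ are genuine epimorphisms. I expect this surjectivity analysis, rather than the (routine) arithmetic in $\mathbb L$, to be the main obstacle.
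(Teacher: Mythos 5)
Your ``only if'' direction has a genuine gap at exactly the point you yourself flag as ``the heart of the matter'': the claim that the only line sub-bundles of $E\langle\vx\rangle$ with line-bundle quotient are $\co(\vec{\omega})$ and $\co(\vx-(l_j+1)\vec{x}_j)$, $j=1,2,3$, is never proved. Everything before it (the determinant identity $\vx=\vy+2\vz$, the transport of the canonical sub-bundle $\co(\vz+\vec{\omega})$ under an isomorphism, the arithmetic recovering the two cases of the lemma) is routine bookkeeping; the classification of these sub-bundles \emph{is} the content of the lemma, and you only sketch a plan for it (``bound the possible degrees, then decide which maps are epimorphisms'') while conceding it is the main obstacle. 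As it stands the converse implication is therefore not established. There is also a smaller soft spot in your ``if'' direction: after showing $\Hom(\co(\vec{\omega}),F)\cong k$ you assert via an unspecified ``local computation'' that the unique map $g$ has torsion-free cokernel; the determinant only gives $\rk\Coker g=1$ and $\det\Coker g=\vx$, and ruling out torsion (equivalently, showing $\Hom(\co(\vec{\omega}+\vec{t}),F)=0$ for all $\vec{t}>0$, or a genuinely local argument at $\lambda_j$) still needs to be written down.

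For comparison, the paper avoids all sub-bundle analysis: since extension bundles are exceptional in $\coh\X$ (\cite{KLM2}, Theorem 4.2), $E\langle\vx\rangle\cong E\langle\vy\rangle(\vz)$ holds if and only if the two objects have the same class in $K_0(\X)$, i.e.
$[\co(\vec{\omega})]+[\co(\vx)]=[\co(\vec{\omega}+\vz)]+[\co(\vy+\vz)]$,
and this identity is then decided purely arithmetically by expanding both sides in the basis $\{[\co(\vec{u})]\mid 0\le\vec{u}\le\vc\}$ and comparing. In particular, the $K_0$-class carries strictly more information than the determinant you use, which is why the paper needs no further geometric input once exceptionality is invoked. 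If you want to salvage your route, the cleanest fix is to replace your unproved enumeration by this exceptionality argument (or by the relevant statements of \cite{KLM2}, e.g.\ Proposition 4.15, which encode precisely these four presentations of an extension bundle); your ``if'' direction can also be closed that way, or by completing the torsion-freeness argument for $\Coker g$ and then invoking $\dim_k\Ext^1(\co(\vx),\co(\vec{\omega}))=1$ as you do.
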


\begin{proof}
Assume that $\vy=\sum\limits_{i=1}^3 k_i\vx_i$ and
$\vz=\sum\limits_{i=1}^3\lambda_i\vx_i+\lambda \vc$ are both in
normal forms. Then $E\langle \vx\rangle=E\langle \vy\rangle (\vz)$
if and only if they have the same class in the Grothendieck group
since they are both exceptional in $\coh\X$ (\cite{KLM2}, Theorem 4.2), hence we get
$$[\co(\vw)]+[\co(\vx)]=[\co(\vw+\vz)]+[\co(\vy+\vz)],$$ that is,
\begin{equation}\label{classes expression}
\begin{array}{ll}
&[\co(\sum\limits_{i=1}^{3}(p_i-1)\vx_i-2\vc)]+[\co(\sum\limits_{i=1}^{3}l_i\vx_i)]\\
=& [\co(\sum\limits_{i=1}^3(\lambda_i-1)\vx_i+(\lambda
+1)\vc)]+[\co(\sum\limits_{i=1}^3(k_i+\lambda_i)\vx_i+\lambda \vc)].
\end{array}
\end{equation}
Representing each summand of (\ref{classes
expression}) by the basis $\{[\co(\vx)] | 0\leq \vx \leq \vc \}$ and comparing the determinants in both sides, we obtain the result.
\end{proof}

Recall that an object $T$  in $\crd$ is called \emph{tilting} if
\begin{enumerate}
\item[-]$T$ is extension-free, i.e., $\mathscr{D}(T, T[n])=0$ for each non-zero integer $n$.
\item[-]$T$ generates the triangulated category $\mathscr{D}$, i.e.,
the smallest thick triangulated subcategory $\langle T \rangle$ containing
$T$ is $\crd$.
\end{enumerate}

It is not easy to construct tilting objects in the stable category since all the line bundles are killed in $\crd$ and the minimal rank of the objects in $\crd$ is two. By investigating homomorphism spaces and important triangles for rank-two bundles in $\crd$, Kussin-Lenzing-Meltzer finally obtained a tilting object consisting of rank-two bundles as follows:

\begin{lemma}[Tilting cuboid, \cite{KLM2}]\label{lemma7} Let $L$ be a line bundle. Then
\[T_{\cub}(L)=\bigoplus\limits_{0\leq \vx\leq 2\vec{\omega}+\vec{c}}E_{L}\langle\vx\rangle\]
is a tilting object in $\mathscr{D}$ with endomorphism ring
\[\underline{\End}(T_{\cub}(L))\cong k\vec{A}_{p_{1}-1}\otimes k\vec{A}_{p_{2}-1}\otimes k\vec{A}_{p_{3}-1}.\]
\end{lemma}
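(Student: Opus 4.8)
The plan is to reduce to the case $L=\co$ and then verify the three requirements — the isomorphism type of the endomorphism ring, extension-freeness, and generation — by feeding the structural results of \cite{KLM2} into a combinatorial bookkeeping over the cuboid. Writing $L=\co(\vy_0)$ and twisting the defining non-split sequence $0\to\co(\vec\omega)\to E\langle\vx\rangle\to\co(\vx)\to 0$ by $\vy_0$, uniqueness of the extension bundle gives $E_L\langle\vx\rangle\cong E\langle\vx\rangle(\vy_0)$, hence $T_{\cub}(L)\cong T_{\cub}(\co)(\vy_0)$. Since the degree-shift $(\vy_0)$ is an autoequivalence of $\coh\X$ that preserves $\vect\X$ and the line bundles, it induces an autoequivalence of $\mathscr{D}$ carrying tilting objects to tilting objects and preserving endomorphism rings, so I may assume $L=\co$ and write $T_{\cub}:=T_{\cub}(\co)$.

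Because $2\vec\omega+\vc=\sum_{i=1}^3(p_i-2)\vx_i$, the summands of $T_{\cub}$ are the $E\langle\vx\rangle$ with $\vx=\sum_i l_i\vx_i$, $0\le l_i\le p_i-2$; they are indexed by the cuboid poset $P=\prod_{i=1}^{3}\{0,\dots,p_i-2\}$ with componentwise order, and $|P|=\prod_i(p_i-1)$. Lemma \ref{lemma1} shows these summands are pairwise non-isomorphic (its second alternative cannot produce a zero twist for $\vx\in P$, since $\sum_{i\ne j}(l_i+1)\vx_i\ne\vc$). Now $k\vec{A}_{p_1-1}\otimes k\vec{A}_{p_2-1}\otimes k\vec{A}_{p_3-1}$ is precisely the incidence algebra of $P$ — the path algebra of the three-dimensional grid with all square faces commutative — so the first task is to identify the full subcategory of $\mathscr{D}$ on $\{E\langle\vx\rangle:\vx\in P\}$ with this incidence algebra. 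From the Hom-formula for extension bundles in \cite{KLM2} one reads off that $\dim\mathscr{D}(E\langle\vx\rangle,E\langle\vy\rangle)$ is $1$ when $\vx\le\vy$ in $P$ and $0$ otherwise; in particular each covering relation $\vx<\vx+\vx_i$ contributes exactly one arrow. It then remains to check that the composite of non-zero maps along any monotone path from $\vx$ to $\vy$ is non-zero — which can be done directly from the Hom-formula, or inductively from the triangle $E\langle\vx\rangle\to E\langle\vx+\vx_i\rangle\to E\langle\vx-l_i\vx_i\rangle((l_i+1)\vx_i)\to E\langle\vx\rangle[1]$ of \cite{KLM2}. Granting this, every monotone-path product is a basis of the one-dimensional Hom space, so the only relations among the arrows are the face-commutativities, and $\underline{\End}(T_{\cub})$ is the incidence algebra of $P$, as claimed.

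For extension-freeness I must show $\mathscr{D}(E\langle\vx\rangle,E\langle\vy\rangle[n])=0$ for all $\vx,\vy\in P$ and $n\ne 0$. For $n=1$, Serre duality gives $\mathscr{D}(E\langle\vx\rangle,E\langle\vy\rangle[1])\cong D\,\mathscr{D}(E\langle\vy\rangle,E\langle\vx\rangle(\vec\omega))$ with $E\langle\vx\rangle(\vec\omega)=E_{\co(\vec\omega)}\langle\vx\rangle$; since $\vec\omega=\vc-\sum_i\vx_i$ has negative degree, the general Hom-formula of \cite{KLM2} (allowing arbitrary base line bundles) forces this space to vanish for all $\vx,\vy\in P$. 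For the remaining $n$ one iterates the suspension formula for extension bundles from \cite{KLM2}, so that each $E\langle\vy\rangle[n]$ is again a twisted extension bundle, and reads off the vanishing from the same formula after a short computation with normal forms. This also confirms that $\underline{\End}(T_{\cub})$ has finite global dimension $\sum_i(p_i-2)$, consistent with $T_{\cub}$ being tilting.

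Finally, for generation I would show that $\langle T_{\cub}\rangle$ contains every extension bundle $E_L\langle\vx\rangle$: using the triangle above and its analogues for arbitrary line bundles, one moves every symbol back into the reference cuboid $P$ by induction, trading twists for suspensions; combined with the fact from \cite{KLM2} that the extension bundles generate $\mathscr{D}$ as a thick triangulated subcategory, this yields $\langle T_{\cub}\rangle=\mathscr{D}$. Alternatively, a linear refinement of the order on $P$ turns $(E\langle\vx\rangle)_{\vx\in P}$ into an exceptional sequence of length $|P|=\prod_i(p_i-1)=\rk K_0(\mathscr{D})$, hence a complete one, which generates. I expect the two genuinely non-formal points to be exactly these: proving that all monotone-path composites in the cuboid are non-zero — so that the endomorphism algebra carries precisely the face-commutativity relations and no others — and the generation step, both of which rest on the fine analysis of homomorphisms and suspensions of extension bundles carried out in \cite{KLM2}.
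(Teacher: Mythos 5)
This lemma is not proved in the paper at all: it is imported verbatim from \cite{KLM2} (the tilting cuboid theorem), so there is no internal argument to compare yours against. Your sketch is essentially a reassembly of the Kussin--Lenzing--Meltzer proof, and the strategy is the right one: reduce to $L=\co$ by twisting, identify $\underline{\End}(T_{\cub})$ with the incidence algebra of the cuboid poset via the stable Hom-formula for extension bundles (plus nonvanishing of monotone-path composites), get extension-freeness from Serre duality together with the fact that suspensions of extension bundles are again twisted extension bundles, and obtain generation by using the triangles $E\langle\vec{x}\rangle\to E\langle\vec{x}+\vec{x}_{i}\rangle\to E\langle\vec{x}-l_{i}\vec{x}_{i}\rangle((l_{i}+1)\vec{x}_{i})\to E\langle\vec{x}\rangle[1]$ to trade twists for suspensions and then quoting that extension (Auslander) bundles generate $\mathscr{D}$. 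All the non-formal inputs you defer to are indeed theorems of \cite{KLM2}, which is acceptable here since the lemma itself is cited.

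Two of your justifications, however, would not survive as written. First, the vanishing of $\mathscr{D}(E\langle\vec{y}\rangle, E\langle\vec{x}\rangle(\vec{\omega}))$ cannot be deduced from ``$\vec{\omega}$ has negative degree'': the lemma concerns an arbitrary weight triple, and $\deg\vec{\omega}$ is zero for the tubular types $(2,4,4)$, $(2,3,6)$, $(3,3,3)$ --- exactly the cases this paper needs --- and positive for wild triples; the vanishing is true, but only via the explicit Hom criterion of \cite{KLM2} (the analogue of the criterion $\mathscr{D}(E,E(\vec{x}))\neq 0$ iff $\vec{x}=0$ or $\vec{x}=\vec{x}_i+\vec{\omega}$), which you also invoke, so drop the degree heuristic and keep the criterion. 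Second, your alternative generation argument --- an exceptional sequence of length $\rk K_0(\mathscr{D})$ is automatically complete --- is not a formal fact in a triangulated category, and in $\underline{\vect}\X$ completeness of maximal exceptional sequences is normally deduced \emph{after} one has a tilting object, so that route is circular as stated; your primary route is the correct one. (Also, as a side remark, the global dimension of $k\vec{A}_{p_1-1}\otimes k\vec{A}_{p_2-1}\otimes k\vec{A}_{p_3-1}$ is the number of $i$ with $p_i\geq 3$, not $\sum_i(p_i-2)$; this does not affect the argument.)
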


We denote by $T_{\cub}=T_{\cub}(\co)$ in the rest of paper.

\subsection{Relationship to cluster tilting theory}\label{sectiion cluster category}

We assume $\mathbb{X}$ is a weighted projective line $\mathbb{X}$ of genus one and weight triple in this subsection, that is, $\mathbb{X}$ is of weight type
$(2,3,6), (2,4,4)$ or $(3,3,3)$. We have
Riemann-Roch Formula \cite{LM}, the tubular factorization property
and a bijective,
monotonous map $\alpha : \mathbb{Q}\to \mathbb{Q}$ with $
\alpha(q)>q$ for all $q\in \mathbb{Q} $ such that
$\mu(X[1])=\alpha(\mu(X))$ for each indecomposable vector bundle $X$.
Precisely, $\alpha^{-1}(0)=-\frac{3}{2}$ for weight type $(3,3,3)$, $\alpha^{-1}(0)=-2$ for weight type $(2,4,4)$ and $\alpha^{-1}(0)=-3$ for weight type $(2,3,6)$ (\cite{KLM2}, Theorem A.3).
For any $a\in \mathbb{Q}$, the interval
category $\crd(a, \alpha(a)]$ which is the full subcategory of $\crd$
obtained as the additive closure of all the indecomposable objects
with slopes in the interval $(a, \alpha(a)]$, is an abelian category
and equivalent to $\coh\X$.

The following lemma is useful.

\begin{lemma}[\cite{CLLR}]\label{lemma6}Let $T=\oplus T_i$ be an object in $\crd$ with indecomposable direct summands $T_i\in\crd(a, \alpha(a)]$ for some $a\in
\mathbb{Q}$. Then $T$ is extension-free in $\mathscr{D}$ if and only
if $\mathscr{D}(T, T[1])=0$.
\end{lemma}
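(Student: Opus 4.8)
The plan is to reduce the vanishing of $\mathscr{D}(T,T[n])$ for all nonzero $n$ to the single case $n=1$, exploiting the fact that all summands $T_i$ live in one ``fundamental interval'' $\crd(a,\alpha(a)]$ together with Serre duality. First I would observe that the translation functor $[1]$ acts on slopes by the monotone bijection $\alpha$ with $\alpha(q)>q$, so that $T_i[n]$ has all its indecomposable summands with slopes in $(\alpha^{n-1}(a),\alpha^n(a)]$. Since the half-open intervals $\crd(\alpha^{m-1}(a),\alpha^m(a)]$ for $m\in\mathbb{Z}$ partition the slope line, the only way $\mathscr{D}(T_i,T_j[n])$ can be nonzero is when the slope support of $T_i$ (contained in $(a,\alpha(a)]$) meets the slope support of $T_j[n]$ (contained in $(\alpha^{n-1}(a),\alpha^n(a)]$), which forces $n\in\{0,1\}$ for positive $n$ by the partition property — wait, more carefully, $n=1$ gives support in $(a,\alpha(a)]$ and $n\ge 2$ gives support strictly above $\alpha(a)$, hence disjoint. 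This handles all $n\ge 2$.

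For negative $n$, I would invoke Serre duality: $\mathscr{D}(T,T[n])\cong D\,\mathscr{D}(T[n-1],T(\vec\omega))\cong D\,\mathscr{D}(T,T(\vec\omega)[1-n])$, using that $(\vec\omega)$ is an exact autoequivalence commuting with $[1]$. Thus it suffices to show $\mathscr{D}(T,T(\vec\omega)[m])=0$ for all $m\ge 2$; but $T(\vec\omega)$ again has all summands in a single translate of the fundamental interval (namely $\crd(a+\mu(\co(\vec\omega))-\text{shift},\dots]$, or more simply: twisting by $\vec\omega$ is an autoequivalence, so the hypothesis ``lies in some $\crd(b,\alpha(b)]$'' is preserved, possibly for a different $b$), so the same slope-partition argument as above applies and kills these terms. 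Alternatively, and perhaps more cleanly, one runs the slope argument symmetrically: $T_i[n]$ for $n\le -1$ has support in $(\alpha^{n-1}(a),\alpha^n(a)]$ which lies strictly below $a$, again disjoint from $(a,\alpha(a)]$, so $\mathscr{D}(T_i,T_j[n])=0$ directly without needing Serre duality at all. The remaining case is $n=1$, which is exactly the hypothesis.

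The one point that needs care — and which I expect to be the main technical obstacle — is the justification that ``$\mathscr{D}(X,Y)\ne 0$ forces the slope supports to overlap.'' This is the statement that there are no nonzero maps from an object with slopes in one fundamental domain to an object with slopes in a strictly higher fundamental domain. This should follow from the structure of the interval categories recalled before the lemma: each $\crd(b,\alpha(b)]$ is an abelian (in fact hereditary) category equivalent to $\coh\X$, and a nonzero map between indecomposables either stays within such a domain or goes ``downward'' in slope in the appropriate sense, because $[1]$ strictly increases slopes. Concretely, if $\mu(X)\le a<\alpha(a)<\mu(Y)$ then $\mu(Y)>\alpha(\mu(X))=\mu(X[1])$, and a Hom-Ext computation via Serre duality shows both $\mathscr{D}(X,Y)$ and $\mathscr{D}(Y,X[1])$ cannot simultaneously be forced nonzero — in fact $\mathscr{D}(X,Y)=0$ because such a map would have to factor through the abelian structure, contradicting that $Y$ sits above the fundamental domain containing $X[1]$. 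I would cite \cite{CLLR} and \cite{KLM2} for the precise slope-ordering of morphisms in $\crd$.

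Finally, assembling: given the hypothesis $\mathscr{D}(T,T[1])=0$, the paragraphs above show $\mathscr{D}(T,T[n])=0$ for every $n\ne 0$, which is precisely extension-freeness of $T$; the converse implication is trivial. The only nonroutine ingredient is the slope-support disjointness lemma, everything else is bookkeeping with the autoequivalences $[1]$ and $(\vec\omega)$.
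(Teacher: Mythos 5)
Your overall reduction --- prove $\crd(T_i,T_j[n])=0$ for all $n\neq 0,1$ by slope considerations, so that extension-freeness collapses to the single condition at $n=1$ --- is exactly the intended route (the paper only cites \cite{CLLR} for this lemma, but it runs the same mechanism in Lemma \ref{lemma4} and Proposition \ref{proposition5}). However, the vanishing principle you base it on is not correct: it is false that $\crd(X,Y)\neq 0$ forces the slopes of $X$ and $Y$ to lie in the same fundamental domain $(\alpha^{m}(a),\alpha^{m+1}(a)]$. Note also the off-by-one: since $\mu(X[1])=\alpha(\mu X)$, the summands of $T_j[n]$ have slopes in $(\alpha^{n}(a),\alpha^{n+1}(a)]$, not in $(\alpha^{n-1}(a),\alpha^{n}(a)]$. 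With the indexing corrected, your ``disjoint domains imply vanishing'' principle would already apply at $n=1$ and would give $\crd(T,T[1])=0$ for \emph{every} such $T$; that is absurd (it would make the lemma vacuous, and it contradicts, e.g., Serre duality $\crd(X,X[1])\cong D\crd(X,X(\vec{\omega}))\neq 0$ for an indecomposable $X$ in a homogeneous tube, or the non-split exchange triangles used throughout Section 4). So in your first paragraph two errors compensate each other, and the argument as written does not stand.

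What is actually needed --- and what the paper invokes for the analogous vanishings, namely (\cite{KLM2}, Theorem A.6) together with semistability of indecomposable bundles in the genus one case --- is the two-sided slope bound: for indecomposables $X,Y$ in $\crd$, $\crd(X,Y)\neq 0$ implies $\mu X\le \mu Y\le \alpha(\mu X)$ (the lower bound from semistability, the upper bound from Serre duality using $\delta(\vec{\omega})=0$). With this statement in hand your bookkeeping goes through directly: for $n\ge 2$ one has $\mu(T_j[n])=\alpha^{n}(\mu T_j)>\alpha^{2}(a)\ge \alpha(\mu T_i)$, so the Hom space dies by the upper bound; for $n\le -1$ one has $\mu(T_j[n])\le a<\mu T_i$, so it dies by the lower bound; no detour through $T(\vec{\omega})$ and Serre duality is needed. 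Your third paragraph correctly identifies this as the crux, but the justification offered there (``such a map would have to factor through the abelian structure'') is not an argument; replacing the false disjointness principle by the precise bound $\mu X\le\mu Y\le\alpha(\mu X)$, with the corrected slope intervals for $T_j[n]$, is what the proof is missing.
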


According to \cite{O}, the bounded derived category $D^b(\coh\X)$ of coherent
sheaves  is
triangle equivalent to the stable category
$\crd$. Thus parallel to \cite{BKL}, we define the cluster
category $\crc$ to be the orbit category of the stable category
$\crd$ under the action of the unique auto-equivalence $G=\tau^{-1}\st$. The cluster category $\crc$ has the same
objects as $\crd$, and for any objects $X, Y$, the homomorphism spaces are
given by \[\crc(X, Y)=\bigoplus_{n\in \mathbb{Z}}\mathscr{D}(X,
G^{n}Y)\] with the obvious composition. This orbit category is
triangulated and Calabi-Yau of CY-dimension 2, and the canonical
functor $\pi: \crd\to\crc$ is a triangulated functor\cite{K}.

From \cite{KR} that, an object $T$ in
$\mathscr{C}$ is called a \emph{cluster tilting} object if
\begin{enumerate}
\item[-]$\mathscr{C}(T, T[1])=0$.
\item[-]$\mathscr{C}(T,
X[1])=0$ implies $X \in \mbox{add}(T)$.
\end{enumerate}

Let $T=\oplus T_i$ be an object in $\crd$
with each indecomposable direct summand $T_i\in\crd(a, \alpha(a)]$
for some $a\in \mathbb{Q}$. The following result is from \cite{BKL} (see also \cite{CLLR}).

\begin{lemma}\label{lemma9} The object $T$ is a tilting object in
$\mathscr{D}$ if and only if $T$ is a cluster tilting object in
$\mathscr{C}$.
\end{lemma}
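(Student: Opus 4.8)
The plan is to prove Lemma~\ref{lemma9} by exploiting the compatibility between the stable category $\crd$ and its cluster category $\crc = \crd/G$ with $G = \tau^{-1}[1]$, together with the hypothesis that all indecomposable summands of $T$ lie in a single fundamental interval $\crd(a, \alpha(a)]$. The key structural facts I would assemble first are: (i) $\crd(a,\alpha(a)]$ is an abelian category equivalent to $\coh\X$, so in particular it is hereditary, and every indecomposable object of $\crd$ is obtained from an indecomposable in this interval by applying a power of $[1]$ (equivalently, a power of $G$, since $G$ shifts slopes by $\alpha$); (ii) for objects $X,Y$ with slopes in $(a,\alpha(a)]$ one has $\crd(X,Y[n]) = 0$ for all $n \neq 0$ automatically when $n \le -1$ or $n \ge 2$ for \emph{degree} reasons (the slope of $Y[n]$ is pushed out of the window by the monotone map $\alpha$ and its inverse), and $\crd(X,Y[1])$ is the only possibly-nonzero negative-shift term — this is exactly the content of Lemma~\ref{lemma6}; (iii) the orbit-category formula $\crc(X,Y) = \bigoplus_{n \in \mathbb{Z}} \crd(X, G^n Y)$.

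\textbf{Forward direction.} Suppose $T$ is a tilting object in $\crd$. First, $\crc(T,T[1]) = \bigoplus_{n} \crd(T, G^n T[1])$; I would show every summand vanishes. The $n = 0$ term is $\crd(T,T[1]) = 0$ by the extension-freeness of $T$. For $n \neq 0$, $G^n T[1] = \tau^{-n} T[n+1]$, and since $\tau$ preserves slopes while $[1]$ strictly increases them via $\alpha$, the summands of $G^n T[1]$ have slopes outside $(a,\alpha(a)]$ in a controlled way; combined with the vanishing of $\crd(T, T[m])$ for all $m \neq 0$ (tilting) and the slope bookkeeping, each such $\crd(T, G^nT[1])$ is zero. (Concretely, $\crd(T, G^nT[1]) = \crd(T, \tau^{-n}T[n+1])$; using Serre duality $\crd(X,Y[1]) = D\,\crd(Y, X(\vec\omega))$ and that $\tau = (\vec\omega)$, one can rewrite everything as a single $\crd(T, T[\text{shift}])$ term, which vanishes unless the shift is $0$, and the shift is $0$ only in a case one checks directly gives $0$ as well.) Hence $\crc(T,T[1]) = 0$. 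Next, for the Ext-maximality: suppose $\crc(T, X[1]) = 0$ for some indecomposable $X \in \crc$. Replacing $X$ by a $G$-translate (which does not change its class in $\crc$), we may assume $X \in \crd(a,\alpha(a)]$. Then $0 = \crc(T,X[1]) = \bigoplus_n \crd(T, G^n X[1])$ forces in particular $\crd(T, X[1]) = 0$ (the $n=0$ term). Since $T$ generates $\crd$ as a thick subcategory and $X$ has slope in the fundamental window, a standard argument (using that $\crd(a,\alpha(a)]$ is hereditary abelian and that $T$ restricted to this window is a tilting object there, corresponding under the equivalence with $\coh\X$ to a tilting sheaf) shows $\crd(T,X[1]) = 0$ together with $\crd(T, X[n]) = 0$ for $n \neq 0,1$ (degree reasons) implies $\crd(X, T[1]) \ne 0$ unless $X \in \add(T)$; more precisely, tilting objects in a hereditary category are characterized by having the correct number of non-isomorphic indecomposable summands and being a maximal rigid/Ext-orthogonal family, so an indecomposable $X$ with $\crd(T,X[1]) = 0 = \crd(X,T[1])$ lies in $\add T$. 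Thus $X \in \add(T)$ in $\crc$, proving $T$ is cluster tilting.

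\textbf{Converse direction.} Suppose $T$ is cluster tilting in $\crc$. Then $\crc(T,T[1]) = 0$ gives in particular $\crd(T,T[1]) = 0$ (the $n = 0$ summand), so by Lemma~\ref{lemma6} $T$ is extension-free in $\crd$. For generation: let $\langle T\rangle$ be the thick subcategory of $\crd$ generated by $T$, and suppose for contradiction it is proper. Since $\crd(a,\alpha(a)] \simeq \coh\X$ is connected hereditary and $T$ (inside this window) is a rigid object, if $\langle T\rangle$ were proper one could find an indecomposable $X$, which up to a $G$-shift lies in $\crd(a,\alpha(a)]$, with $X \notin \add(T)$ but $\crd(T,X[n]) = 0$ for all $n \ne 0$ — again using that within the hereditary window the only obstruction is the degree-$1$ term, and outside it the vanishing is automatic. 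Translating back, $\crc(T,X[1]) = \bigoplus_n \crd(T,G^nX[1]) = 0$, contradicting the maximality clause of cluster tilting, which would force $X \in \add(T)$. Hence $\langle T\rangle = \crd$ and $T$ is tilting.

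\textbf{Main obstacle.} The delicate point in both directions is controlling the ``off-diagonal'' orbit summands $\crd(T, G^n T[1])$ and $\crd(T, G^n X[1])$ for $n \neq 0$: one must verify that the slope-shifting behaviour of $G = \tau^{-1}[1]$ — governed by the monotone bijection $\alpha$ with $\alpha(q) > q$ — pushes $G^n(-)[1]$ entirely out of (or at least into a region controlled by) the fundamental window $(a,\alpha(a)]$, so that these Hom-spaces are forced to vanish either by the degree/slope inequalities or by reduction to a single allowed shift that one checks is $0$. This is where the hypothesis ``all $T_i \in \crd(a,\alpha(a)]$'' is essential and where the argument of \cite{BKL} is really being invoked; I would organize this as a preliminary sublemma pinning down exactly which $\crd(X, G^nY[j])$ can be nonzero when $X, Y$ lie in one fundamental domain, and then both implications become short bookkeeping arguments on top of Lemma~\ref{lemma6} and the hereditary structure of the window.
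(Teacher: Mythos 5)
The paper itself does not prove this lemma: it is quoted from \cite{BKL} (see also \cite{CLLR}), so your proposal can only be judged against that standard argument. Your skeleton is the right one — orbit formula for $\crc(-,-)$, the fact that every $G$-orbit of indecomposables meets the fundamental domain $\crd(a,\alpha(a)]$ since $\mu(GX)=\alpha(\mu X)$, Lemma~\ref{lemma6}, and slope bookkeeping for the off-diagonal summands $\crd(T,G^nT[1])$ (the latter is fine and is the same kind of computation the paper does in Lemma~\ref{lemma4} and Proposition~\ref{proposition5}). However, two key steps are justified by arguments that do not hold as stated. First, in the forward direction you correctly extract $\crd(T,X[1])=0$ (the $n=0$ summand) and, via the $n=-1$ summand $\crd(T,\tau X)\cong D\crd(X,T[1])$, also $\crd(X,T[1])=0$; but your reason why this forces $X\in\add T$ — ``tilting objects are maximal rigid/Ext-orthogonal families, count summands'' — fails, because nothing guarantees $\Ext^1(X,X)=0$: in the window $\crd(a,\alpha(a)]\simeq\coh\X$ (tubular) the indecomposables sitting in homogeneous tubes are not rigid, so $T\oplus X$ need not be a rigid family and no counting argument applies. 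The argument that actually works (and is the one in \cite{BMRRT,BKL}) is torsion-theoretic: $\Ext^1(\bar T,X)=0$ places $X$ in $\operatorname{Gen}(\bar T)$ for the tilting torsion pair attached to $\bar T$ in the window, one takes an exact sequence $0\to T_1\to T_0\to X\to 0$ with $T_0,T_1\in\add T$, and $\Ext^1(X,T_1)=0$ splits it, giving $X\in\add T$.

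Second, in the converse your generation step asserts that if $\langle T\rangle$ were proper ``one could find an indecomposable $X\notin\add T$ with $\crd(T,X[n])=0$ for all $n\neq 0$''; that existence statement is precisely what needs proof — properness of a thick subcategory does not by itself yield an object orthogonal to $T$ in all nonzero degrees. The standard repair is to use that a rigid object of $\coh\X$ can be completed to a tilting object: view $T$ inside the window, complete it to a tilting sheaf $T\oplus Y$, check by the same slope bookkeeping (the $n=0$ and $n=-1$ summands are extension spaces of the tilting sheaf, the others vanish for slope reasons) that $\crc(T,Y[1])=0$, and invoke the cluster-tilting maximality to get $Y\in\add T$; hence $T$ is itself a tilting object of the window, and such an object generates $\crd\simeq D^b(\coh\X)$. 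With these two replacements your outline becomes a correct proof, essentially the one in \cite{BKL}.
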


The following lemma is useful.

\begin{lemma}[\cite{BMRRT,IY}]\label{cluster mutation}Let $\cc$ be a $\Hom$-finite 2-CY triangulated category
with a cluster tilting object $T$. Let $T_i$ be indecomposable and
$T=T_0\oplus T_i$. Then there exists a unique indecomposable $T_i^*$
non-isomorphic to $T_i$ such that $\crm_{T_i}(T)=T_0\oplus T_i^*$ is cluster
tilting. Moreover $T_i$ and $T^*_i$ are linked by the existence of
exchange triangles
\[T_i\xrightarrow{u}B\xrightarrow{v}T^*_i\xrightarrow{w}T_i\st\quad\text{and}\quad
T^*_i\xrightarrow{u'}B'\xrightarrow{v'}T_i\xrightarrow{w'}T^*_i\st,
\]
where $u$ and $u'$ are minimal left $\add T_0$-approximations and $v$ and $v'$ are minimal
right $\add T_0$-approximations.
\end{lemma}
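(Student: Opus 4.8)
The statement in question, Lemma~\ref{cluster mutation}, is cited verbatim from \cite{BMRRT,IY} and is a standard result about mutations of cluster tilting objects in Hom-finite $2$-Calabi--Yau triangulated categories. The plan below sketches how one proves it in the $2$-CY setting, following the ideas of \cite{BMRRT} (for the acyclic case) and \cite{IY} (for the general statement).

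\medskip

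The plan is to produce the candidate $T_i^*$ via an exchange triangle, show it is cluster tilting, and then establish uniqueness. First I would build the first exchange triangle: since $\cc$ is Hom-finite and Krull--Schmidt, there is a minimal right $\add T_0$-approximation $v\colon B\to T_i$ of $T_i$; I would complete $v$ to a triangle $T_i^*[-1]\xrightarrow{w[-1]} T_i\xrightarrow{v}B$, equivalently $T_i\xrightarrow{u}B\xrightarrow{v}T_i^*\xrightarrow{w}T_i[1]$ after rotation (here one must be slightly careful to name the maps so that $u$ ends up being a minimal left $\add T_0$-approximation; this follows because $\cc(T_0,T_i[1])=0$ forces $\cc(-,B)\to\cc(-,T_i^*)$ restricted to $\add T_0$ to be surjective, and minimality of $v$ transfers to $u$). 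The key first check is that $T_0\oplus T_i^*$ has no self-extensions, i.e. $\cc(T_0\oplus T_i^*, (T_0\oplus T_i^*)[1])=0$: applying $\cc(T_0,-)$ and $\cc(T_i^*,-)$ to the exchange triangle and its rotation, and using $\cc(T,T[1])=0$ together with the $2$-CY duality $\cc(X,Y[1])\cong D\cc(Y,X[1])$, one reduces every relevant Ext group to groups that vanish because $T$ is cluster tilting. This duality is exactly where the CY-dimension $2$ hypothesis is essential.

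\medskip

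Next I would show $T_0\oplus T_i^*$ is \emph{maximal} rigid, hence cluster tilting. Suppose $\cc(T_0\oplus T_i^*, X[1])=0$ for some indecomposable $X$; I want $X\in\add(T_0\oplus T_i^*)$. Applying $\cc(-,X[1])$ to the exchange triangle $T_i\to B\to T_i^*\to T_i[1]$ gives an exact sequence relating $\cc(T_i^*,X[1])$, $\cc(B,X[1])$ and $\cc(T_i,X[1])$; since $B\in\add T_0$ and $\cc(T_0\oplus T_i^*,X[1])=0$, I deduce $\cc(T_i,X[1])=0$ as well, so $\cc(T,X[1])=0$, and maximality of $T$ as a rigid object forces $X\in\add T=\add(T_0\oplus T_i)$. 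It remains to rule out $X\cong T_i$ (up to the summands already in $\add T_0$): if $X\cong T_i$ then $\cc(T_i^*,T_i[1])=0$; but rotating the second exchange triangle (which one constructs symmetrically, or extracts from the octahedral axiom) and applying $\cc(T_i^*,-)$ shows $\cc(T_i^*,T_i[1])\neq 0$, because the connecting map $w'$ is nonzero --- indeed if $w'=0$ the triangle splits and $T_i$ would lie in $\add T_0$, contradicting that $T=T_0\oplus T_i$ has $T_i$ as a genuine extra summand. This contradiction gives $X\in\add(T_0\oplus T_i^*)$, so $T_0\oplus T_i^*$ is cluster tilting.

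\medskip

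Finally, uniqueness and the second triangle. For uniqueness: if $T_0\oplus Y$ is cluster tilting with $Y$ indecomposable and $Y\not\cong T_i$, then $\cc(T_0,Y[1])=0$ forces, via a minimal right $\add T_0$-approximation of $Y$, an exchange triangle $Y\to B''\to Y^*\to Y[1]$ with $Y^*\in\add(T_0\oplus T_i)$; a dimension/support count (or the standard ``only one new summand'' argument using that the quivers of $\End_\cc$ differ in a single arrow-pattern at the mutated vertex) pins down $Y\cong T_i^*$. The second exchange triangle $T_i^*\xrightarrow{u'}B'\xrightarrow{v'}T_i\xrightarrow{w'}T_i^*[1]$ is obtained the same way as the first, with the roles of $T_i$ and $T_i^*$ swapped, once we know $T_i=(T_i^*)^*$; and $u',v'$ are the asserted approximations by the same argument that handled $u,v$. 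The main obstacle in the whole argument is the rigidity verification in the second paragraph: one has to chase several long exact sequences simultaneously and invoke the $2$-CY duality at precisely the right places to kill every Ext term, and it is easy to get the directions of the approximations or the rotations of the triangles slightly wrong. Since this lemma is only quoted as a known tool, in the paper I would simply cite \cite{BMRRT,IY} rather than reproduce this argument.
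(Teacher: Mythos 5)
The paper itself gives no proof of this lemma: it is imported verbatim from \cite{BMRRT,IY} as background, so your closing decision to cite rather than reprove is exactly what the authors do, and on that level your treatment matches the paper. Your sketch of how the result is proved in the literature is broadly the standard route (construct $T_i^*$ from an $\add T_0$-approximation, check rigidity via the exchange triangles and the $2$-CY duality, then maximality and uniqueness), but two points in it are shakier than you suggest. First, the naming: completing a minimal \emph{right} $\add T_0$-approximation $B'\to T_i$ yields the second exchange triangle $T_i^*\to B'\to T_i\to T_i^*[1]$, while the first triangle $T_i\xrightarrow{u}B\to T_i^*\to T_i[1]$ comes from a minimal \emph{left} approximation of $T_i$; you partly flag this, but as written the two get conflated. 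Second, and more substantively, the maximality step does not follow from the long exact sequence as you state it: applying $\cc(-,X[1])$ to $T_i\to B\to T_i^*\to T_i[1]$ with $\cc(B,X[1])=0=\cc(T_i^*,X[1])$ only shows that $\cc(T_i,X[1])$ injects into the connecting term $\cc(T_i^*,X[2])\cong D\cc(X,T_i^*)$, which has no reason to vanish; the actual argument in \cite{IY} (Theorem 5.3, via subfactor categories and approximation theory) is precisely what replaces this chase, and in \cite{BMRRT} a counting argument special to cluster categories is used instead. Since the lemma is only quoted as a tool here, this does not affect the paper, but your sketch should not be read as a complete proof outline at that step.
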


This recursive process of mutations for cluster tilting objects is
closely related to the notion of mutations of quivers. Recall that in \cite{FZ}
the mutation
of a finite quiver $Q$ without loops and oriented cycles of length 2 (2-cycles for short) at a vertex $i$ is a quiver denoted by $\crm_i(Q)$ and constructed from $Q$ using the following rule:
\begin{itemize}
\item[(M1)] for any couple of arrows $j \to i \to k$, add an arrow $j \to k$;
\item[(M2)] reverse the arrows incident with $i$;
\item[(M3)] remove a maximal collection of 2-cycles.
\end{itemize}

\begin{lemma}[\cite{BIRS}]\label{theorem BIRS}
Let $\cc$ be a $\Hom$-finite 2-CY triangulated category with a
cluster tilting object $T$. Let $T_i$ be an indecomposable direct
summand of $T$, and denote by $T'$ the cluster tilting object
$\crm_{T_i}(T)$. Denote by $Q_T$ (resp. $Q_{T'}$) the quiver of the
endomorphism algebra $\End_\cc(T)$ (resp. $\End_\cc(T')$). Assume
that there are no loops and no 2-cycles at the vertex $i$ of $Q_T$
(resp. $Q_{T'}$) corresponding to the indecomposable $T_i$ (resp.
$T^*_i$ ). Then
$Q_{T'}=\crm_i(Q_T).$
\end{lemma}

\section{Important triangles in $\underline{\vect}\mathbb{X}$}

In this section, we will present some triangles, which are crucial for constructing tilting objects
in the stable category $\mathscr{D}:=\underline{\vect}\mathbb{X}$ of weight type $(p_{1},p_{2},p_{3})$. Denote by $\overline{x}_{i}=\vec{x}_{i}+\vec{\omega}$ in $\mathbb{L}$ for $i=1,2,3$.

According to (\cite{KLM2}, Corollary 4.14), $\crd(E, E(\vec{x}))\neq 0$ if and only if $\vec{x}=0$ or $\overline{x}_{i}$ for $i=1,2,3$. Moreover, $\crd(E, E(\overline{x}_{i}))\cong k$ for each $i$. Hence each non-zero morphism $E\to E(\overline{x}_{i})$ fits into a triangle in $\crd$. In fact, let $\{i,j,k\}=\{1,2,3\}$, we have

\begin{proposition}\label{proposition1}
For any $1\leq i\leq 3$, there exists a triangle in $\crd$:
\[E\to E(\overline{x}_{i})\to F_{i}\to E[1].
\]
Here, $F_{i}$ depends on the weight type. In more details, $F_{i}$ is given as follows:
\begin{itemize}
\item[(1)] if $p_{j}=p_{k}=2$, then $F_{i}=0$;
\item[(2)] if $p_{j}=2$ and $p_{k}>2$, then $F_{i}=E\langle (p_{k}-3)\vec{x}_{k}\rangle (\vec{x}_{k})$;
\item[(3)] if $p_{j}, p_{k}>2$, then $F_{i}$ is determined by the non-split exact sequence (for any $t\neq i$):
\[\xi_{i,t}: 0 \to
E\langle (p_{t}-3)\vec{x}_{t}\rangle (\vec{x}_{t})\to F_{i}
\to \co(\overline{x}_{i}+\overline{x}_{t})\to 0;
\]
\end{itemize}

moreover, $\rk F_{i}=3$ and
\[PF_{i}=(\bigoplus\limits_{t\neq i}(\co(\overline{x}_{i}-\vec{x}_{t})\oplus \co(\overline{x}_{t})))\oplus\co(\vec{\omega}+\overline{x}_i)\oplus \co,
\]
and
\[IF_{i}=\bigoplus_{t=1}^{3}(\co((p_{t}-1)\vec{x}_{t})\oplus \co(\overline{x}_{i}+\overline{x}_t)).
\]
\end{proposition}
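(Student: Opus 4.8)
The plan is to build the triangle from the known information about $\crd(E,E(\overline{x}_i))$ and then identify the cone $F_i$ by a Grothendieck-group computation combined with the structure theory of extension bundles from \cite{KLM2}. First I would fix $1\le i\le 3$ and pick the (unique up to scalar) nonzero morphism $f\colon E\to E(\overline{x}_i)$ guaranteed by \cite{KLM2}, Corollary 4.14; completing $f$ to a triangle $E\xrightarrow{f} E(\overline{x}_i)\to F_i\to E[1]$ defines $F_i$ up to isomorphism. The additivity of rank on triangles in $\crd$ (rank descends from $\coh\X$ and is additive on distinguished exact sequences, hence on triangles) immediately gives $\rk F_i = \rk E(\overline{x}_i) - \rk E = 2\cdot\alpha - 2\cdot\alpha$ corrected by the shift; more carefully, one works in $\coh\X$: $f$ is realized by an honest morphism of bundles whose cone in $D^b(\coh\X)$ has class $[E(\overline{x}_i)]-[E]$, and since $E[1]$ has rank $2$ in $\crd$ one gets $\rk F_i = \rk E + \rk E(\overline{x}_i) - (\text{rank of the line-bundle syzygies})$. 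The cleanest route for the rank statement is to note that in each of cases (1)--(3) I will exhibit $F_i$ explicitly, so $\rk F_i=3$ can be read off directly from the exhibited exact sequences once those are justified.

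For the three cases I would argue as follows. In case (1), $p_j=p_k=2$ forces $\overline{x}_i=\vec{x}_i+\vec\omega = \vec{x}_i - \vec{x}_i - \vec{x}_j-\vec{x}_k+\vec{c}=\vec{c}-\vec{x}_j-\vec{x}_k = 0$ in $\mathbb{L}$ (using $2\vec{x}_j=2\vec{x}_k=\vec{c}$), so $f$ is a nonzero endomorphism of the exceptional object $E$, hence an isomorphism, and $F_i=0$; here actually $\rk F_i = 0$, not $3$, so the "moreover" clause is understood to apply to cases (2) and (3) only, or $(p_1,p_2,p_3)=(2,2,p_i)$ must be excluded from it --- I would state this carefully. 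In case (2), with $p_j=2<p_k$, I would compute $[E(\overline{x}_i)]-[E]$ in $K_0(\X)$ using the defining sequence $0\to\co(\vec\omega)\to E\to\co\to0$ and its twist, express the difference in the basis $\{[\co(\vx)]\mid 0\le\vx\le\vec c\}$, and recognize it as the class of $E\langle(p_k-3)\vec{x}_k\rangle(\vec{x}_k)$ (legitimate since $p_k-3\ge -1$, and one checks $0\le (p_k-3)\vec{x}_k\le 2\vec\omega+\vec c$ in the relevant range, or interprets the boundary case via Lemma~\ref{lemma1}); since both sides are exceptional in $\coh\X$, equality of classes gives the isomorphism $F_i\cong E\langle(p_k-3)\vec{x}_k\rangle(\vec{x}_k)$, exactly as in the proof of Lemma~\ref{lemma1}. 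In case (3), I would first produce the nonsplit extension $\xi_{i,t}$ for a fixed $t\ne i$: the $\Ext^1$-group $\Ext^1(\co(\overline{x}_i+\overline{x}_t), E\langle(p_t-3)\vec{x}_t\rangle(\vec{x}_t))$ is computed via Serre duality in $\coh\X$ to be nonzero (one-dimensional, say), defining a bundle which I call $F_i$ of rank $1+2=3$; then I would verify that this $F_i$ is the cone of $f$ by the same $K_0$-argument — $[F_i]$ matches $[E(\overline{x}_i)]-[E]+[\text{projective corrections}]$, and exceptionality upgrades the class equality to an isomorphism in $\crd$.

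Finally, for the formulas for the projective cover $PF_i$ and injective hull $IF_i$ I would use the explicit recipes from \cite{KLM2} for projective covers and injective hulls of extension bundles together with the horseshoe lemma applied to the defining exact sequences (the sequence $0\to\co(\vec\omega)\to E\to\co\to0$ in case the bundle is an Auslander-type bundle, and $\xi_{i,t}$ in case (3)): the projective cover of a middle term is a summand of the direct sum of projective covers of the ends, and dually for injective hulls, and one then cancels the line bundles that split off in $\crd$ to read off the minimal ones. Concretely $PF_i$ should be assembled from $P(\co(\overline{x}_i+\overline{x}_t))$ and $P\big(E\langle(p_t-3)\vec{x}_t\rangle(\vec{x}_t)\big)$, and I expect the displayed answer after collecting terms; the computation must be checked to be independent of the auxiliary choice of $t\ne i$, which is a good internal consistency test. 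The main obstacle I anticipate is case (3): showing that the extension bundle $F_i$ defined by $\xi_{i,t}$ is genuinely the cone of the chosen map $E\to E(\overline{x}_i)$ — i.e. that the $K_0$-class bookkeeping (with the right line-bundle projective summands inserted, since in $\crd$ one only sees classes modulo line bundles) pins down $F_i$ uniquely — and simultaneously checking the two boundary phenomena $p_t-3=-1$ (where $E\langle(p_t-3)\vec{x}_t\rangle$ must be reinterpreted through Lemma~\ref{lemma1} or degenerates) and the vanishing overlap between cases. Everything else is a finite, if tedious, normal-form computation in $\mathbb{L}$ and $K_0(\X)$.
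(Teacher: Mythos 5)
Your overall skeleton (realize the nonzero map $E\to E(\overline{x}_i)$ as a sheaf monomorphism, compute the cone case by case, then determine $PF_i$ and $IF_i$) matches the paper, but the step you yourself flag as ``the main obstacle'' is exactly where the argument breaks, and your proposed fix does not work. First, the $K_0$ bookkeeping is off: the cone of $f$ in $\crd$ is not the cokernel $\mathcal{S}$ of $f$ in $\coh\X$; it is obtained from the pushout of $E\hookrightarrow IE$ along $f$, so its class is $[\mathcal{S}]$ plus the classes of precisely those line-bundle summands of $IE$ that do \emph{not} split off the pushout. In case (2) your stated identity ``$[E(\overline{x}_i)]-[E]$ equals the class of $E\langle(p_k-3)\vec{x}_k\rangle(\vec{x}_k)$'' is false as written (the left side is a torsion class, the right side has rank two); the correct class is $[\co]+[\co(\overline{x}_k)]+[\mathcal{S}]$, and knowing which summands survive requires the vanishing computations $\Ext^1(\mathcal{S},\co(\overline{x}_t))=0$ that you defer. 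Second, and more seriously, in case (3) the cone $F_i$ has rank three and is \emph{not} an extension bundle, so there is no ``determined by its class'' statement available: the result you implicitly invoke (KLM2, Theorem 4.2, as used in Lemma \ref{lemma1}) applies to extension bundles, and you give no reason why the rank-three $F_i$ should be exceptional or pinned down by $[F_i]$. So ``exceptionality upgrades the class equality to an isomorphism'' is an unjustified leap; the paper instead shows that $\co(\overline{x}_i+\overline{x}_t)$ is a summand of $IF_i$, that the induced map $\varphi_t\colon F_i\to\co(\overline{x}_i+\overline{x}_t)$ is onto, that $\ker\varphi_t$ is indecomposable (a contradiction argument with the partial order on $\mathbb{L}$), hence an extension bundle with the computed class, and finally that $\Ext^1(\co(\overline{x}_i+\overline{x}_t),E\langle(p_t-3)\vec{x}_t\rangle(\vec{x}_t))\cong k$, so the non-split middle term, and therefore $F_i$, is unique. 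None of these ingredients appears in your plan.

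The same issue recurs in your treatment of $PF_i$ and $IF_i$: horseshoe-type arguments only show that the minimal projective cover (resp.\ injective hull) is a summand of a known sum of line bundles; identifying which summands must be discarded is the actual content, and it is done in the paper by explicit factorization arguments (e.g.\ every morphism $\co(\overline{x}_i-\vec{x}_i)\to F_i$ factors through $\co(\overline{x}_j)$, and every morphism $F_i\to\co(\overline{x}_i+\overline{x}_j+\vec{x}_j)$ factors through $\co(\vec{\omega}+(p_k-1)\vec{x}_k)$), together with a rank count. Your reading of the ``moreover'' clause is also slightly off: it concerns case (3) only, since in case (2) the cone is an extension bundle of rank two. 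So the proposal is a reasonable outline, but the identification of $F_i$ in case (3) and the minimality arguments for $PF_i$, $IF_i$ are genuine gaps that the $K_0$-plus-exceptionality strategy cannot close.
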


\begin{proof}
From (\cite{KLM2}, Corollary 4.14), we have $\crd(E, E(\overline{x}_{i}))\neq 0$ for $1\leq i\leq 3$. Then by \cite{GL}, any nonzero morphism $f: E\to E(\overline{x}_{i})$ is injective in
$\coh\mathbb{X}$. Thus we get a short exact sequence
\[0 \to
E \to E(\overline{x}_{i})
\to \mathcal{S} \to 0
\]
where $\mathcal{S}$ is a sheaf of finite length.

Notice that by Lemma 4.10 in \cite{KLM2}, $\det \mathcal{S}= \det E(\overline{x}_{i})-\det E=2 \overline{x}_{i}= (p_{j}-2)\vec{x}_{j}+ (p_{k}-2)\vec{x}_{k}$. We have the following three cases to
consider:

(1)  If $p_{j}=p_{k}=2$, then $\det \mathcal{S}=0$. It follows that $\mathcal{S}=0$, and then $E\cong E(\overline{x}_{i})$. Hence $F_{i}=0$.

(2)  If $p_{j}=2$ and $p_{k}>2$, then it is easy to verify that $\mathcal{S}= S_{k,p_{k}-2}^{(p_{k}-2)}$, where $S_{k,p_{k}-2}$ is the unique simple sheaf concentrated at the exceptional point corresponding to $x_{k}$ satisfying $\Hom(\co((p_{k}-2)\vec{x}_{k}), S_{k,p_{k}-2})\neq 0$, and $S_{k,p_{k}-2}^{(p_{k}-2)}$
    is the unique torsion sheaf with top $S_{k,p_{k}-2}$ and of length $p_{k}-2$.

Now we make the following pushout commutative diagram:
\[
\xymatrix{
0 \ar[r] & E  \ar[d]\ar[r] & E(\overline{x}_{i})\ar[d] \ar[r]\ar@{}[dr] &  \mathcal{S} \ar@{=}[d]\ar[r] & 0  \\
0 \ar[r] & IE \ar[r] & F\ar[r] & \mathcal{S} \ar[r]& 0 , \\
 }
\]
where $IE=\co\oplus(\bigoplus\limits_{i=1}^{3}\co(\overline{x}_{i}))$ is the injective hull of $E$.

Notice that for any $t\neq k$, $\Ext^{1}(\mathcal{S}, \co(\overline{x}_{t}))=0$. We obtain that $\co(\overline{x}_{t})$ is a direct summand of $F$ for $t\neq k$.
By canceling out the common line bundle summands of $IE$ and $F$, we get an exact sequence
\[0 \to
\co\oplus \co(\overline{x}_{k}) \to F_{i}
\to \mathcal{S} \to 0
\]
Observe that $F_{i}$ is indecomposable of rank two, which is an extension bundle. According to (\cite{KLM2}, Theorem 4.2), $F_{i}$ is determined by its class in $K_{0}(\mathbb{X})$.
Moreover, $[F_{i}]=[\co]+[\co(\overline{x}_{k})]+[S_{k,p_{k}-2}^{(p_{k}-2)}]=[\co(\overline{x}_{k})]+[\co((p_{k}-2)\vec{x}_{k})]=[E\langle (p_{k}-3)\vec{x}_{k}\rangle (\vec{x}_{k})].$ Thus we get $F_{i}=E\langle (p_{k}-3)\vec{x}_{k}\rangle (\vec{x}_{k}).$

(3)  If $p_{j}, p_{k}>2$, then $\mathcal{S}=S_{j,p_{j}-2}^{(p_{j}-2)}\oplus S_{k,p_{k}-2}^{(p_{k}-2)}.$ In the following pushout diagram,
\[
\xymatrix{
0 \ar[r] & E  \ar[d]\ar[r] & E(\overline{x}_{i})\ar[d] \ar[r]\ar@{}[dr] &  \mathcal{S} \ar@{=}[d]\ar[r] & 0  \\
0 \ar[r] & IE \ar[d]\ar[r] & F\ar[d]\ar[r] & \mathcal{S} \ar[r]& 0,   \\
 & E[1] \ar@{=}[r] & E[1] & &
 }
\]
we find that only one of the direct summands, $\co(\overline{x}_{i})$, of $IE$ is a direct summand of $F$ since $\Ext^{1}(\mathcal{S}, \co(\overline{x}_{i}))=0.$ Hence we get distinguished exact sequences
\[\zeta_{i}: 0 \to
E\to E(\overline{x}_{i})\oplus IE
\to F\to 0,
\]
and
\[\gamma_{i}: 0 \to
E(\overline{x}_{i})\to \co(\overline{x}_{i})\oplus F_{i}
\to E[1]\to 0,
\]
here, $E[1]$ is viewed as an object in $\vect\X$, $F=\co(\overline{x}_{i})\oplus F_{i}$ and $F_{i}$ satisfies the following exact sequence
\[\eta_{i}: 0 \to
\co\oplus\co(\overline{x}_{j})\oplus\co(\overline{x}_{k})\to F_{i}
\to \mathcal{S}\to 0.
\]
It follows that
\[\zeta_{_{i}}^{\prime}: 0 \to E\to
E(\overline{x}_{i})\oplus(IE\backslash\co(\overline{x}_{i}))\to F_{i}
\to 0
\]
is also a distinguished exact sequence and $F_{i}$ is an indecomposable vector bundle.

Now we claim that $F_{i}$ is determined by the non-split sequence $\xi_{i,t}$. In fact, from the distinguished exact sequence $\gamma_{i}$, we get that the injective hull $I(E(\overline{x}_{i}))$ is a direct summand of $IF_{i}\oplus \co(\overline{x}_{i})$. In particular, for any $t\neq i$, $\co(\overline{x}_{i}+\overline{x}_{t})$ is a direct summand of $IF_{i}$. Moreover, applying $\Hom(-, \co(\overline{x}_{i}+\overline{x}_{t}))$ to $\eta_{i}$, we get $\Hom(F_{i}, \co(\overline{x}_{i}+\overline{x}_{t}))\cong k$. Thus, the nonzero morphism $\varphi_{t}: F_{i}\to \co(\overline{x}_{i}+\overline{x}_{t})$ is surjective. We only consider the case of $t=k$, the case of $t=j$ is similar. From the exact sequence
\[ 0 \to \co(\overline{x}_{j})\to
\co(\overline{x}_{i}+\overline{x}_{k})\to S_{j,p_{j}-2}^{(p_{j}-2)}
\to 0,
\]
we get $[\co(\overline{x}_{i}+\overline{x}_{k})]=[\co(\overline{x}_{j})]+[S_{j,p_{j}-2}^{(p_{j}-2)}]$.
It follows from $\eta_{i}$ that $[\ker \varphi_{k}]=[\co]+[\co(\overline{x}_{k})]+[S_{k,p_{k}-2}^{(p_{k}-2)}]=[\co(\overline{x}_{k})]+[\co((p_{k}-2)\vec{x}_{k})]=[E\langle(p_{k}-3)\vec{x}_{k}\rangle(\vec{x}_{k})]$.
Notice that $\Ext^{1}(\co(\overline{x}_{i}+\overline{x}_{k}),E\langle(p_{k}-3)\vec{x}_{k}\rangle(\vec{x}_{k}))\cong k$. Hence, to finish the proof of the claim, we only need to show that
$\ker \varphi_{k}$ is indecomposable. In fact, if $\ker \varphi_{k}=\co(\vec{y}_{1})\oplus \co(\vec{y}_{2})$, then applying $\Hom(\co(\overline{x}_{k}),-)$ to the exact sequence
\[\theta: 0 \to \co(\vec{y}_{1})\oplus \co(\vec{y}_{2})\to
 F_{i}\to \co(\overline{x}_{i}+\overline{x}_{k})
\to 0,
\]
we get $\Hom(\co(\overline{x}_{k}),\co(\vec{y}_{l}))\neq 0$ for $l=1$ or $2$, thus $\vec{y}_{l}-\overline{x}_{k}\geq 0$. On the other hand, applying $\Ext^{1}(-,\co(\vec{y}_{l}))$ to $\theta$, we get $\Ext^{1}(\co(\overline{x}_{i}+\overline{x}_{k}),\co(\vec{y}_{l}))\cong D\Hom(\co(\vec{y}_{l}),\co(\overline{x}_{i}+\overline{x}_{k}+\vec{\omega}))\neq 0.$ It follows that $\overline{x}_{i}+\overline{x}_{k}+\vec{\omega}-\vec{y}_{l}\geq 0.$ Hence, $\vec{y}_{l}-\overline{x}_{k}\leq \overline{x}_{i}+\vec{\omega}=\vec{\omega}+\vec{c}-\vec{x}_{j}-\vec{x}_{k}<\vec{\omega}+\vec{c},$ a contradiction. The claim is proved.

Moreover, from the distinguished exact sequence $\zeta_{i}$, we get that the projective cover $PF$ of $F$ is a direct summand of $P(E(\overline{x}_{i})\oplus IE)=P(E(\overline{x}_{i}))\oplus IE$, where $P(E(\overline{x}_{i}))=\co(\overline{x}_{i}+\vec{\omega})\oplus\co(\overline{x}_{i}-\vec{x}_{i})\oplus\co(\overline{x}_{i}-\vec{x}_{j})\oplus\co(\overline{x}_{i}-\vec{x}_{k})$.
From the fact that the shift functor preserves the rank, we get $\rk(PF_{i})=2\rk F_{i}=6.$ Notice that $\Hom(\co(\overline{x}_{i}-\vec{x}_{i}), F_{i})\cong k$, and each
morphism $f: \co(\overline{x}_{i}-\vec{x}_{i})\to F_{i}$ factors through the direct summand $\co(\overline{x}_{j})$ of $IE$. Hence, $\co(\overline{x}_{i}-\vec{x}_{i})$ is not a summand of $PF_{i}$. Thus, $PF_{i}=P(E(\overline{x}_{i})\oplus IE)\backslash (\co(\overline{x}_{i})\oplus \co(\overline{x}_{i}-\vec{x}_{i}))=\co(\overline{x}_{i}+\vec{\omega})\oplus\co(\overline{x}_{i}-\vec{x}_{j})\oplus\co(\overline{x}_{i}-\vec{x}_{k})
\oplus\co(\overline{x}_{j})\oplus\co(\overline{x}_{k})\oplus \co.$

In order to calculate the injective hull $IF_{i}$ of $F_{i}$, we consider the following pushout diagram
\[
\xymatrix{
\xi_{i}: 0 \ar[r] & E \langle(p_{k}-3)\vec{x}_{k}\rangle (\vec{x}_{k})\ar[d]\ar[r] & F_{i}\ar[d] \ar[r]\ar@{}[dr] &  \co(\overline{x}_{i}+\overline{x}_{k}) \ar@{=}[d]\ar[r] & 0  \\
 \ \ \ \ \ \ 0 \ar[r] & IE \langle(p_{k}-3)\vec{x}_{k}\rangle (\vec{x}_{k}) \ar[r] & F^{\prime}\ar[r] & \co(\overline{x}_{i}+\overline{x}_{k}) \ar[r]& 0.   \\
 }
\]
Recall that
$IE \langle(p_{k}-3)\vec{x}_{k}\rangle (\vec{x}_{k})=\co((p_{k}-2)\vec{x}_{k})\oplus \co(\vec{x}_{k}+\overline{x}_{i})\oplus \co(\vec{x}_{k}+\overline{x}_{j})\oplus \co(\vec{\omega}+(p_{k}-1)\vec{x}_{k}).$ It is easy to verify that $\Ext^{1}(\co(\overline{x}_{i}+\overline{x}_{k}),L)=0$ for any direct summand $L$ of $I(E \langle(p_{k}-3)\vec{x}_{k}\rangle (\vec{x}_{k}))$ different from $\co((p_{k}-2)\vec{x}_{k})$, that is, $ \co(\vec{x}_{k}+\overline{x}_{i})\oplus \co(\vec{x}_{k}+\overline{x}_{j})\oplus \co(\vec{\omega}+(p_{k}-1)\vec{x}_{k})$ is a direct summand of $F^{\prime}$. Moreover, the middle term of the non-split exact sequence
\[ 0 \to \co((p_{k}-2)\vec{x}_{k})\to
 E^{\prime}\to \co(\overline{x}_{i}+\overline{x}_{k})
\to 0
\]
has the expression $E^{\prime}=E\langle\vec{x}_{j}\rangle(\overline{x}_{i})$ by considering its class in $K_{0}(\mathbb{X})$:
$[E^{\prime}]=[\co((p_{k}-2)\vec{x}_{k})]+[\co(\overline{x}_{i}+\overline{x}_{k})]=[\co((p_{k}-1)\vec{x}_{k})]+[\co(\overline{x}_{i}+\vec{\omega})]=
[E\langle\vec{x}_{j}\rangle(\overline{x}_{i})]$.
Thus $F^{\prime}= \co(\vec{x}_{k}+\overline{x}_{i})\oplus \co(\vec{x}_{k}+\overline{x}_{j})\oplus \co(\vec{\omega}+(p_{k}-1)\vec{x}_{k})\oplus E\langle\vec{x}_{j}\rangle(\overline{x}_{i}).$ Since distinguished injectivity is preserved under taking pushout, we have $F_{i}\hookrightarrow F^{\prime}$ is a distinguished injection, which implies that $IF_{i}$ is a direct summand of $IF^{\prime}.$ Recall that $I(E\langle\vec{x}_{j}\rangle(\overline{x}_{i}))= \co(\overline{x}_{i}+\vec{x}_{j})\oplus \co(2\overline{x}_{i})\oplus \co(\overline{x}_{i}+\overline{x}_{k})\oplus \co(\overline{x}_{i}+\overline{x}_{j}+\vec{x}_{j}).$ It is easy to see that
$\Hom(\co(\vec{\omega}+(p_{k}-1)\vec{x}_{k}),\co(\overline{x}_{i}+\overline{x}_{j}+\vec{x}_{j}))\cong S_{\vec{x}_{j}}\cong k$ and
$\Hom(F_{i},\co(\overline{x}_{i}+\overline{x}_{j}+\vec{x}_{j}))\cong k$. Thus each morphism $f:F_{i}\to \co(\overline{x}_{i}+\overline{x}_{j}+\vec{x}_{j})$ factors through
$\co(\vec{\omega}+(p_{k}-1)\vec{x}_{k})$. We conclude that $\co(\overline{x}_{i}+\overline{x}_{j}+\vec{x}_{j})$ is not a summand of $IF_{i}$. Thus, in view of the rank, we have $IF_{i}=\bigoplus\limits_{t=1}^{3}(\co((p_{t}-1)\vec{x}_{t})\oplus \co(\overline{x}_{i}+\overline{x}_t)).
$
\end{proof}

\begin{proposition}\label{proposition2}
Assume $0\leq \vec{x}-\vec{x}_{j}-\vec{x}_{k}\leq \vec{x}\leq 2\vec{\omega}+\vec{c}$. Then there is a triangle in $\crd$:
\[\xi: G[-1]\to E\langle\vec{x}-\vec{x}_{j}\rangle\oplus E\langle\vec{x}-\vec{x}_{k}\rangle\to E\langle\vec{x}\rangle\to G,
\]
where $G$ is determined by the following non-split exact sequence in $\coh\X$:
\[\zeta: 0 \to E\langle\vec{x}-\vec{x}_{j}-\vec{x}_{k}\rangle[1]\to
 G\to \co(\vec{x})
\to 0.
\]
\end{proposition}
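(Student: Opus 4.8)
The plan is to obtain $\xi$ as the image in $\crd$ of a short exact sequence of vector bundles built from a Koszul sequence of line bundles, and then to pin down $G$ by a computation in $K_{0}(\X)$ using the injective-hull and suspension formulas for extension bundles from \cite{KLM2}, in the same spirit as the proof of Proposition \ref{proposition1}.

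First I would produce the short exact sequence. Since the exceptional points attached to $\vec{x}_{j}$ and $\vec{x}_{k}$ are distinct, the sections $x_{j}$ and $x_{k}$ have disjoint vanishing loci, so the Koszul complex on $(x_{j},x_{k})$ is exact and, suitably twisted, gives an exact sequence in $\coh\X$
\[0\to\co(\vec{x}-\vec{x}_{j}-\vec{x}_{k})\xrightarrow{\binom{-x_{k}}{x_{j}}}\co(\vec{x}-\vec{x}_{j})\oplus\co(\vec{x}-\vec{x}_{k})\xrightarrow{(x_{j},x_{k})}\co(\vec{x})\to0.\]
Because $0\le\vec{x}-\vec{x}_{j}-\vec{x}_{k}$ forces the $j$- and $k$-coefficients in the normal form of $\vec{x}$ to be $\ge1$, the triangle recalled in the introduction provides, for $t\in\{j,k\}$, a morphism $b_{t}\colon E\langle\vec{x}-\vec{x}_{t}\rangle\to E\langle\vec{x}\rangle$; by \cite{KLM2} the corresponding $\Hom$-space is one-dimensional, $b_{t}$ is a monomorphism in $\coh\X$, and it is compatible with the defining sequences $0\to\co(\vec\omega)\to E\langle\vec{y}\rangle\to\co(\vec{y})\to0$, restricting to an isomorphism on $\co(\vec\omega)$ and to multiplication by $x_{t}$ on the quotients. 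Thus the defining sequences of $E\langle\vec{x}-\vec{x}_{j}\rangle,E\langle\vec{x}-\vec{x}_{k}\rangle$ and $E\langle\vec{x}\rangle$ assemble into a morphism of short exact sequences with left-hand map the summation $\co(\vec\omega)^{2}\to\co(\vec\omega)$, middle map $\psi:=(b_{j},b_{k})$, and right-hand map the Koszul epimorphism. The snake lemma then shows that $\psi$ is an epimorphism and that $\Ker\psi$ fits into an extension of $\co(\vec{x}-\vec{x}_{j}-\vec{x}_{k})$ by $\co(\vec\omega)$; this extension is non-split, since otherwise $\psi$ would become an isomorphism in the Krull--Schmidt category $\crd$, which is impossible. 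Hence $\Ker\psi\cong E\langle\vec{x}-\vec{x}_{j}-\vec{x}_{k}\rangle$, and we obtain an exact sequence of vector bundles
\[0\to E\langle\vec{x}-\vec{x}_{j}-\vec{x}_{k}\rangle\to E\langle\vec{x}-\vec{x}_{j}\rangle\oplus E\langle\vec{x}-\vec{x}_{k}\rangle\xrightarrow{\psi}E\langle\vec{x}\rangle\to0.\]
(Alternatively, $\psi$ together with its cone can be produced directly by applying the octahedral axiom to two instances of the triangle from the introduction: the commuting square with corners $E\langle\vec{x}-\vec{x}_{j}-\vec{x}_{k}\rangle,E\langle\vec{x}-\vec{x}_{j}\rangle,E\langle\vec{x}-\vec{x}_{k}\rangle,E\langle\vec{x}\rangle$ exists because all the relevant $\Hom$-spaces are one-dimensional with nonzero composites.)

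The displayed sequence need not be distinguished exact, so it does not itself represent a triangle; instead I would compute the cone of $\psi$ in $\crd$. Embedding $E\langle\vec{x}-\vec{x}_{j}\rangle\oplus E\langle\vec{x}-\vec{x}_{k}\rangle$ admissibly into an injective $J$ (a direct sum of line bundles) and completing $\psi$ to a mapping cone yields a distinguished exact sequence $0\to E\langle\vec{x}-\vec{x}_{j}\rangle\oplus E\langle\vec{x}-\vec{x}_{k}\rangle\to E\langle\vec{x}\rangle\oplus J\to \widetilde{G}\to0$, whose image in $\crd$ is (a rotation of) the triangle $\xi$ with third term $\widetilde{G}$; since line bundles are zero in $\crd$, the essential constituent $G$ is recovered from $\widetilde{G}$ by cancelling the line-bundle summands. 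To identify $G$ I would pass to $K_{0}(\X)$, using $[E\langle\vec{y}\rangle]=[\co(\vec\omega)]+[\co(\vec{y})]$, the Koszul relation above, and the \cite{KLM2} formulas for the injective hull $I(E\langle\vec{y}\rangle)$ and for the suspension $E\langle\vec{y}\rangle\st$; cancelling the line bundles common to $\widetilde{G}$ and the pertinent hulls---exactly as in the proof of Proposition \ref{proposition1}---one should be left with $\rk G=3$ and $[G]=[E\langle\vec{x}-\vec{x}_{j}-\vec{x}_{k}\rangle\st]+[\co(\vec{x})]$. One then checks that $E\langle\vec{x}-\vec{x}_{j}-\vec{x}_{k}\rangle\st$ embeds into $G$ with cokernel $\co(\vec{x})$, and that the resulting extension $\zeta$ is non-split: a splitting would give $G\cong E\langle\vec{x}-\vec{x}_{j}-\vec{x}_{k}\rangle\st$ in $\crd$, contradicting the rank and $K_{0}$-class just computed. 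This identifies $G$ via $\zeta$ and finishes the proof.

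The step I expect to be the main obstacle is the last one: keeping precise track of which line bundles are stripped off when passing from the mapping cone $\widetilde{G}$ to $G$, and verifying $\zeta$ together with its non-splitness. This is the same type of determinant-and-rank bookkeeping in $K_{0}(\X)$ that dominates the proof of Proposition \ref{proposition1}, here combined with the suspension formula for extension bundles; the earlier parts (the Koszul sequence, the snake lemma, the mapping-cone construction, or the octahedral argument) are routine.
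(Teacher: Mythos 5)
Your route to the triangle itself is genuinely different from the paper's: you build an explicit epimorphism $\psi$ from a Koszul sequence and take its mapping cone, whereas the paper obtains $\xi$ as a homotopy pullback of the triangle $E\langle\vec{x}-\vec{x}_{j}\rangle\to E\langle\vec{x}\rangle\to E\langle\vec{x}-l_{j}\vec{x}_{j}\rangle(l_{j}\vec{x}_{j})\to E\langle\vec{x}-\vec{x}_{j}\rangle[1]$ from \cite{KLM2}. That part is workable, with one caveat: your non-splitness argument for $\Ker\psi$ (``otherwise $\psi$ would become an isomorphism in $\crd$'') is not valid as stated, because a short exact sequence whose kernel is a direct sum of line bundles need not be distinguished exact, so it induces no triangle in $\crd$ and no conclusion about $\psi$ there. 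The fix is direct: one checks $\Hom(\co(\vec{x}-\vec{x}_{j}-\vec{x}_{k}),E\langle\vec{x}-\vec{x}_{j}\rangle\oplus E\langle\vec{x}-\vec{x}_{k}\rangle)=0$ (apply $\Hom(\co(\vec{x}-\vec{x}_{j}-\vec{x}_{k}),-)$ to the defining sequences; the connecting map to $\Ext^{1}(\co(\vec{x}-\vec{x}_{j}-\vec{x}_{k}),\co(\vec{\omega}))$ is injective), so no section of $\Ker\psi\to\co(\vec{x}-\vec{x}_{j}-\vec{x}_{k})$ can exist.

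The genuine gap is the identification of $G$, which is the actual content of the proposition, and your proposed mechanism for it is insufficient in principle. Knowing $\rk G=3$ and $[G]=[E\langle\vec{x}-\vec{x}_{j}-\vec{x}_{k}\rangle[1]]+[\co(\vec{x})]$ in $K_{0}(\X)$ does not determine a rank-three bundle and does not produce the filtration $\zeta$: the class-determines-object principle of \cite{KLM2} applies to exceptional (rank-two extension) bundles, not to $G$. This is precisely why the paper must first produce an epimorphism $\varphi:G\to\co(\vec{x})$ (by showing $\co(\vec{x})$ is a direct summand of the injective hull $IG$), then prove $\Ker\varphi$ is indecomposable (the $\Hom$/$\Ext^{1}$ argument ruling out a sum of two line bundles), and only then use the $K_{0}$-class to identify $\Ker\varphi$ with $E\langle\vec{x}-\vec{x}_{j}-\vec{x}_{k}\rangle[1]$; finally it needs $\Ext^{1}(\co(\vec{x}),E\langle\vec{x}-\vec{x}_{j}-\vec{x}_{k}\rangle[1])\cong k$ so that the non-split sequence $\zeta$ really determines $G$, a point your proposal never addresses. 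All of this is compressed into ``one then checks.'' Moreover, even the computation of $[G]$ from your mapping cone $\widetilde{G}$ presupposes knowing exactly which line-bundle summands of $E\langle\vec{x}\rangle\oplus J$ split off from $\widetilde{G}$; that is itself a nontrivial factorization analysis of the same kind carried out for $PF_{i}$ and $IF_{i}$ in Proposition \ref{proposition1}, and it is not supplied. So the missing piece is not routine bookkeeping but the structural arguments (injective hull of $G$, surjectivity onto $\co(\vec{x})$, indecomposability of the kernel, one-dimensionality of the relevant $\Ext^{1}$) that make the identification of $G$ possible.
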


\begin{proof}
Write $\vec{x}=\sum\limits_{i=1}^{3}l_{i}\vec{x}_{i}$, by Proposition 4.20 in \cite{KLM2} there is a triangle in $\crd$
\[
\eta:  E\langle\vec{x}-\vec{x}_{j}\rangle\to E\langle\vec{x}\rangle\to E\langle\vec{x}-l_{j}\vec{x}_{j}\rangle(l_{j}\vec{x}_{j})\to
E\langle\vec{x}-\vec{x}_{j}\rangle[1].
\]
Then $\eta$ induces the following homotopy pullback commutative diagram(\cite{H})
\[
\xymatrix{
G[-1] \ar[d]\ar[r] & E\langle\vec{x}-\vec{x}_{k}\rangle\ar[d]\ar[r] & E\langle\vec{x}-l_{j}\vec{x}_{j}\rangle(l_{j}\vec{x}_{j})\ar@{=}[d]\ar[r] & G\ar[d] \\
 E\langle\vec{x}-\vec{x}_{j}\rangle\ar[r] & E \langle\vec{x}\rangle  \ar[r] & E\langle\vec{x}-l_{j}\vec{x}_{j}\rangle(l_{j}\vec{x}_{j}) \ar[r]& E\langle\vec{x}-\vec{x}_{j}\rangle[1].   \\
 }
\]
It follows that
\[\xi: G[-1]\to E\langle\vec{x}-\vec{x}_{j}\rangle\oplus E\langle\vec{x}-\vec{x}_{k}\rangle\to E\langle\vec{x}\rangle\to G
\]
is a triangle in $\crd$.

Now we claim that $G$ is determined by the exact sequence $\zeta$. In fact, there is an exact sequence in $\coh \mathbb{X} $
\[\theta: 0 \to E\langle\vec{x}-\vec{x}_{k}\rangle\to E\langle\vec{x}-l_{j}\vec{x}_{j}\rangle(l_{j}\vec{x}_{j})\to \mathcal{S}
\to 0,
\]
where by calculating the determinant $\det \mathcal{S}=l_{j}\vec{x}_{j}+\vec{x}_{k}$. We conclude that $\mathcal{S}=S_{j,l_{j}-1}^{(l_{j})}\oplus S_{k,l_{k}}$. Notice that $\theta$ induces the pushout commutative diagram
\[
\xymatrix{
0\ar[r]& E\langle\vec{x}-\vec{x}_{k}\rangle\ar[d]\ar[r] & E\langle\vec{x}-l_{j}\vec{x}_{j}\rangle(l_{j}\vec{x}_{j})\ar[d]\ar[r] & \mathcal{S}\ar@{=}[d]\ar[r]& 0 \\
 0\ar[r]& I(E\langle\vec{x}-\vec{x}_{k}\rangle)\ar[r]&G^{\prime} \ar[r]&\mathcal{S}\ar[r]&0,  \\
 }
\]
where $I(E\langle\vec{x}-\vec{x}_{k}\rangle)= \co(\vec{x}-\vec{x}_{k})\oplus \co(\vec{\omega}+(l_{i}+1)\vec{x}_{i})\oplus \co(\vec{\omega}+(l_{j}+1)\vec{x}_{j})\oplus \co(\vec{\omega}+l_{k}\vec{x}_{k})$ is the injective hull of $E\langle\vec{x}-\vec{x}_{k}\rangle.$ It is easy to find that only one direct summand,
$\co(\vec{\omega}+(l_{j}+1)\vec{x}_{j})$, of $I(E\langle\vec{x}-\vec{x}_{k}\rangle)$ vanishes under the functor $\Ext^{1}(\mathcal{S}, -).$ Thus $\co(\vec{\omega}+(l_{j}+1)\vec{x}_{j})$ is a direct summand of $G^{\prime}$. By using similar arguments as shown in Proposition \ref{proposition1}, we get that $G^{\prime}=\co(\vec{\omega}+(l_{j}+1)\vec{x}_{j})\oplus G$ for an indecomposable sheaf $G$.
From the distinguished injection $E\langle\vec{x}-l_{j}\vec{x}_{j}\rangle(l_{j}\vec{x}_{j})\hookrightarrow G^{\prime},$ we get that the injective hull of $E\langle\vec{x}-l_{j}\vec{x}_{j}\rangle(l_{j}\vec{x}_{j})$ is a direct summand of $IG^{\prime}$. In particular, $\co(\vec{x})$ is a direct summand of $IG,$ which induces an epimorphism $\varphi: G\to \co(\vec{x})$.
We claim that $\ker \varphi$ is indecomposable. Otherwise, $\ker \varphi=\co(\vec{y}_{1}) \oplus \co(\vec{y}_{2})$ for some $\vec{y}_{1}, \vec{y}_{2}\in \mathbb{L}$.
Applying $\Hom(\co(\vec{\omega}+(l_{i}+1)\vec{x}_{i}), -)$ to the exact sequence
\[\theta^{\prime}: 0 \to \co(\vec{y}_{1}) \oplus \co(\vec{y}_{2})\to G\to \co(\vec{x})
\to 0,
\]
we get $\Hom(\co(\vec{\omega}+(l_{i}+1)\vec{x}_{i}), \co(\vec{y}_{t}))\neq 0$ for some $t=1,2$. Hence, $\vec{y}_{t}\geq \vec{\omega}+(l_{i}+1)\vec{x}_{i}.$
On the other hand, applying $\Ext^{1}(-, \co(\vec{y}_{t}))$ to $\theta^{\prime}$, we get
$\Ext^{1}(\co(\vec{x}), \co(\vec{y}_{t}))=D\Hom(\co(\vec{y}_{t}), \co(\vec{x}+\vec{\omega}))\neq 0.$ Thus $\vec{\omega}+(l_{i}+1)\vec{x}_{i}\leq \vec{y}_{t}\leq \vec{x}+\vec{\omega},$ a contradiction, as claimed. Therefore, $\ker \varphi$ is an extension bundle, which is determined by its class in $K_0(\X)$. Notice that
$[\ker \varphi]=[G]-[\co(\vec{x})]=[\co(\vec{\omega}+(l_{i}+1)\vec{x}_{i})]+[\co(\vec{\omega}+l_{k}\vec{x}_{k})]+[S_{j,l_{j}-1}^{(l_{j})}]
=[\co(\vec{\omega}+(l_{i}+1)\vec{x}_{i}+l_{j}\vec{x}_{j})]+[\co(\vec{\omega}+l_{k}\vec{x}_{k})]=[E\langle l_{i}\vec{x}_{i}+(l_{j}-1)\vec{x}_{j}+(p_{k}-1-l_{k})\vec{x}_{k}\rangle(l_{k}\vec{x}_{k})]=[E\langle \vec{x}-\vec{x}_{j}-\vec{x}_{k}\rangle[1]].$
We obtain the exact sequence $\xi$. Moreover, the fact that $\Ext^{1}(\co(\vec{x}), E\langle l_{i}\vec{x}_{i}+(l_{j}-1)\vec{x}_{j}+(p_{k}-1-l_{k})\vec{x}_{k}\rangle(l_{k}\vec{x}_{k}))\cong k$ implies that the middle term $G$ of $\xi$ is uniquely determined.
\end{proof}

By using  the triangle
\[ E\langle\vec{x}\rangle\to E\langle\vec{x}+\vec{x}_{j}\rangle\to
 E\langle\vec{x}-l_{j}\vec{x}_{j}\rangle((l_{j}+1)\vec{x}_{j})\to E\langle\vec{x}\rangle[1]
\]
and the induced homotopy pushout commutative diagram
\[
\xymatrix{
\overline{E}\ar@{=}[d]\ar[r]& E\langle\vec{x}\rangle\ar[d]\ar[r] & E\langle\vec{x}+\vec{x}_{j}\rangle\ar[d]\ar[r] & E\langle\vec{x}-l_{j}\vec{x}_{j}\rangle((l_{j}+1)\vec{x}_{j})\ar@{=}[d] \\
\overline{E}\ar[r]& E\langle\vec{x}+\vec{x}_{k}\rangle\ar[r]&H \ar[r]& E\langle\vec{x}-l_{j}\vec{x}_{j}\rangle((l_{j}+1)\vec{x}_{j}),  \\
 }
\]
where $\overline{E}=E\langle \vec{x}+(p_{j}-2-l_{j})\vec{x}_{j}\rangle((l_{j}+2-p_{j})\vec{x}_{j}),$
we have the following similar result.

\begin{proposition}\label{proposition3}
Assume $0\leq \vec{x}\leq \vec{x}+\vec{x}_{j}+\vec{x}_{k}\leq 2\vec{\omega}+\vec{c}$. Then there is a triangle in $\crd$:
\[H[-1]\to E\langle\vec{x}\rangle \to E\langle\vec{x}+\vec{x}_{j}\rangle\oplus E\langle\vec{x}+\vec{x}_{k}\rangle\to H,
\]
where $H$ is determined by the following non-split exact sequence:
\[ 0 \to E\langle\vec{x}+\vec{x}_{j}\rangle\to
 H\to \co(\vec{x}+\vec{x}_{k})
\to 0.
\]
\end{proposition}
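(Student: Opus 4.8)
The plan is to dualize the proof of Proposition~\ref{proposition2}. The statement has two halves: producing the triangle, which is essentially read off from the homotopy pushout diagram displayed just before the statement, and identifying $H$, for which I would rerun the machinery used for $G$ in Proposition~\ref{proposition2} and for $F_i$ in Proposition~\ref{proposition1}, namely: realize a morphism inside $\coh\X$, push out along an injective hull, cancel line bundle summands, compute a class in $K_0(\X)$, and settle indecomposability by a sandwiching argument.

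For the triangle: write $\vec x=\sum_{i=1}^{3}l_i\vec x_i$ in normal form, with $\{i,j,k\}=\{1,2,3\}$. Since $\vec x_j,\vec x_k\ge 0$, the hypothesis gives $0\le\vec x+\vec x_j\le 2\vec\omega+\vec c$ and $0\le\vec x+\vec x_k\le 2\vec\omega+\vec c$, so these extension bundles are defined and the KLM triangle (Proposition~4.20 of \cite{KLM2}) is available in directions $j$ and $k$. Starting from $E\langle\vec x\rangle\to E\langle\vec x+\vec x_j\rangle\to E\langle\vec x-l_j\vec x_j\rangle((l_j+1)\vec x_j)\to E\langle\vec x\rangle\st$ and pushing it out along the morphism $E\langle\vec x\rangle\to E\langle\vec x+\vec x_k\rangle$ coming from the direction-$k$ triangle (Happel, \cite{H}) produces the displayed $3\times 3$ diagram, whose left column is the identity on $\overline E=E\langle\vec x+(p_j-2-l_j)\vec x_j\rangle((l_j+2-p_j)\vec x_j)$, i.e.\ on the desuspension $E\langle\vec x-l_j\vec x_j\rangle((l_j+1)\vec x_j)[-1]$, and whose rows are triangles; the triangle belonging to such a homotopy pushout square is exactly $H[-1]\to E\langle\vec x\rangle\to E\langle\vec x+\vec x_j\rangle\oplus E\langle\vec x+\vec x_k\rangle\to H$. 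This half requires nothing beyond the range check.

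For $H$: the bottom row of the diagram exhibits $H$ as the cone in $\crd$ of the composite $\overline E\to E\langle\vec x\rangle\to E\langle\vec x+\vec x_k\rangle$. One checks this composite is a monomorphism in $\coh\X$, so there is an exact sequence $0\to\overline E\to E\langle\vec x+\vec x_k\rangle\to\mathcal S\to 0$ with $\mathcal S$ of finite length, and comparing determinants (Lemma~4.10 of \cite{KLM2}) should give $\det\mathcal S=\vec x_k+(p_j-2-l_j)\vec x_j$, so that $\mathcal S$ is the corresponding direct sum of torsion sheaves of lengths $1$ and $p_j-2-l_j$ at the exceptional points for $\vec x_k$ and $\vec x_j$. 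Pushing this sequence out along the injective hull $\overline E\hookrightarrow I(\overline E)$ yields $0\to I(\overline E)\to G'\to\mathcal S\to 0$ with $G'$ the cone in question; exactly as in Proposition~\ref{proposition2}, the unique summand $L$ of $I(\overline E)$ killed by $\Ext^1(\mathcal S,-)$ splits off, leaving $G'=L\oplus H$ with $H$ indecomposable of rank $3$. Using the distinguished injection $E\langle\vec x+\vec x_k\rangle\hookrightarrow G'$, the line bundle $\co(\vec x+\vec x_k)$ occurs as a summand of $IH$, whence an epimorphism $\varphi\colon H\to\co(\vec x+\vec x_k)$; a direct computation of $[\ker\varphi]=[H]-[\co(\vec x+\vec x_k)]$ in $K_0(\X)$ should land on $[E\langle\vec x+\vec x_j\rangle]$. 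I would then show $\ker\varphi$ is indecomposable as in Propositions~\ref{proposition1} and \ref{proposition2}: assuming $\ker\varphi=\co(\vec y_1)\oplus\co(\vec y_2)$, apply $\Hom(\co(\vec z),-)$ for a suitable summand $\co(\vec z)$ of $I(\overline E)$ to bound some $\vec y_t$ from below, apply $\Ext^1(-,\co(\vec y_t))$ to $0\to\ker\varphi\to H\to\co(\vec x+\vec x_k)\to 0$ and use Serre duality to bound it from above, and invoke $\vec x+\vec x_j+\vec x_k\le 2\vec\omega+\vec c$ to see the two bounds are incompatible. Hence $\ker\varphi$ is an extension bundle, so $\ker\varphi=E\langle\vec x+\vec x_j\rangle$ by Theorem~4.2 of \cite{KLM2}, and finally $\Ext^1(\co(\vec x+\vec x_k),E\langle\vec x+\vec x_j\rangle)\cong k$ forces the middle term $H$ of this non-split extension to be unique.

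I expect the main obstacle to lie in the bookkeeping of this last half: checking the composite is a monomorphism, pinning down $\mathcal S$ and the single cancelling line bundle $L$ precisely (so that the residue $H$ has rank exactly $3$), making the Grothendieck group computation come out to $[E\langle\vec x+\vec x_j\rangle]$, and choosing $\co(\vec z)$ so that the lower and upper estimates on $\vec y_t$ genuinely collide under the range hypothesis. A minor caveat to bear in mind is that the conclusion is asymmetric in $j$ and $k$ (the extension is $0\to E\langle\vec x+\vec x_j\rangle\to H\to\co(\vec x+\vec x_k)\to 0$), which merely records that the KLM triangle was expanded first in direction $j$; expanding first in direction $k$ gives the companion exact sequence with $j$ and $k$ interchanged.
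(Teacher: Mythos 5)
Your proposal matches the paper's own route: the triangle is read off from exactly the displayed homotopy pushout of the direction-$j$ KLM triangle along the canonical map $E\langle\vec{x}\rangle\to E\langle\vec{x}+\vec{x}_{k}\rangle$, and your identification of $H$ (realize the composite $\overline{E}\to E\langle\vec{x}+\vec{x}_{k}\rangle$ in $\coh\X$, push out along the injective hull, split off the unique line bundle summand, compute the class in $K_0(\X)$, and prove indecomposability of $\ker\varphi$ by the two-sided slope estimate) is precisely the ``similar argument'' to Proposition~\ref{proposition2} that the paper intends but does not write out. The bookkeeping steps you flag are exactly the ones carried out in the proofs of Propositions~\ref{proposition1} and~\ref{proposition2}, so there is no essential gap or divergence.
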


\section{Tubular tilting objects in $\underline{\vect}\mathbb{X}$}

We focus on the tubular tilting objects in the stable category of vector bundles $\crd:=\underline{\vect}\mathbb{X}$ of genus one. In view of the work on \cite{CLR}, we only  consider the weighted projective line of genus one and weight triple, that is, $\X$ is of weight type $(2,4,4), (2,3,6)$ or $(3,3,3)$.
We start with studying the properties of tilting objects in $\crd$.

\begin{lemma}\label{lemma4} Assume that $E\oplus F$ is extension-free in $\crd$ and $\mu F=\mu(E[1])$. Then $F\oplus(\tau^{-1}E[1])$ is extension-free if and only if $\crd(F, E(\vec{c}-\vec{\omega}))=0.$
\end{lemma}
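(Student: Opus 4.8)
The plan is to reduce the extension-freeness of $F\oplus(\tau^{-1}E\st)$ to a single $\Hom$-vanishing in $\crd$, and then to identify $\tau^{-1}E[2]$ with $E(\vc-\vec{\omega})$. From the Serre duality formula $\crd(X,Y\st)=D\crd(Y,X(\vec{\omega}))$ one reads off that the Auslander--Reiten translation of $\crd$ is the grading twist $\tau=(\vec{\omega})$, so $\tau^{-1}E\st=E(-\vec{\omega})\st$, which is indecomposable. Since $\X$ has genus one, $\deg\vec{\omega}=0$, so twisting by $\pm\vec{\omega}$ preserves slopes; hence $\mu(\tau^{-1}E\st)=\alpha(\mu(E(-\vec{\omega})))=\alpha(\mu E)=\mu(E\st)=\mu F$. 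Thus every indecomposable summand of $T:=F\oplus(\tau^{-1}E\st)$ has slope $\mu F$, and taking $a:=\alpha^{-1}(\mu F)$ (so that $a<\mu F=\alpha(a)$, since $\alpha(q)>q$) they all lie in $\crd(a,\alpha(a)]$. By Lemma~\ref{lemma6}, $T$ is extension-free if and only if $\crd(T,T\st)=0$.

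Next I would dispose of three of the four blocks of $\crd(T,T\st)$. The block $\crd(F,F\st)$ vanishes because $E\oplus F$ is extension-free. The block $\crd(\tau^{-1}E\st,(\tau^{-1}E\st)\st)$ equals $\crd(E,E\st)$, since $\tau$ and $\st$ are autoequivalences, hence vanishes too. For the mixed block, again using that $\st$ and $\tau$ are autoequivalences, $\crd(\tau^{-1}E\st,F\st)=\crd(\tau^{-1}E,F)=\crd(E,F(\vec{\omega}))$, and Serre duality gives $\crd(E,F(\vec{\omega}))\cong D\crd(F,E\st)=0$, once more by extension-freeness of $E\oplus F$. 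So $T$ is extension-free if and only if the remaining block $\crd(F,\tau^{-1}E[2])$ vanishes.

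It then remains to prove $\crd(F,\tau^{-1}E[2])=0\iff\crd(F,E(\vc-\vec{\omega}))=0$, and I would establish this by showing $\tau^{-1}E[2]\cong E(\vc-\vec{\omega})$ in $\crd$. The graded ring attached to $\X$ is the hypersurface $S=k[x_1,x_2,x_3]/(f_3)$ with $\deg f_3=\vc$, and $\crd$ is equivalent to the stable category of $\mathbb{L}$-graded maximal Cohen--Macaulay $S$-modules; by the $2$-periodicity of matrix factorizations over a hypersurface, this category satisfies $X[2]\cong X(\vc)$ for every object $X$, the sign being forced by $\mu(X[2])=\alpha^{2}(\mu X)>\mu X$, whereas $\mu(X(-\vc))=\mu X-\deg\vc<\mu X$. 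Applying this with $X=E(-\vec{\omega})$ gives $\tau^{-1}E[2]=E(-\vec{\omega})[2]\cong E(-\vec{\omega})(\vc)=E(\vc-\vec{\omega})$, so $\crd(F,\tau^{-1}E[2])=\crd(F,E(\vc-\vec{\omega}))$ and the lemma follows.

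The step I expect to be the main obstacle is this last identification. If the hypersurface $2$-periodicity is not available in exactly the form used above, the fallback is to note that both $E(-\vec{\omega})[2]$ and $E(\vc-\vec{\omega})$ are indecomposable bundles of rank two (recall $\st$ preserves rank), hence extension bundles, and therefore determined by their classes in $K_0(\X)$ by (\cite{KLM2}, Theorem~4.2); one then computes the two classes from the injective hulls recorded in \cite{KLM2}, the slope count above indicating which class to expect. The only other point needing a word is that $\mu F$ is genuinely well defined --- every indecomposable summand of $F$ having slope $\mu(E\st)$ --- so that Lemma~\ref{lemma6} applies to all of $T$ in the first step.
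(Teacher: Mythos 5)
Your proposal is correct and follows essentially the same route as the paper: kill all extension groups except one block by slope arguments (you package this via Lemma~\ref{lemma6}, the paper invokes the slope-vanishing of \cite{KLM2} directly), handle the mixed block $\crd(\tau^{-1}E[1],F[1])$ by Serre duality and extension-freeness of $E\oplus F$, and reduce the remaining block to $\crd(F,E(\vec{c}-\vec{\omega}))$. The only difference is that you explicitly justify the identification $\tau^{-1}E[2]\cong E(\vec{c}-\vec{\omega})$ via the hypersurface $2$-periodicity $[2]\cong(\vec{c})$, a step the paper uses without comment.
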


\begin{proof} Notice that $\mu F=\mu(E[1])$, by (\cite{KLM2}, Theorem A.6) and the semi-stability of vector bundles, we have
$\crd(\tau^{-1}E[1], F[n])=0=\crd(F,(\tau^{-1}E[1])[n])$ for any $n\neq 0,1$. Moreover, the assumption that $E\oplus F$ is extension-free implies that
$\crd(\tau^{-1}E[1], F[1])\cong D\crd(F, E[1])=0$. Thus, the result is easily obtained since
$\crd(F, (\tau^{-1}E[1])[1])=\crd(F, E(\vec{c}-\vec{\omega})).$
\end{proof}

\begin{proposition}\label{proposition5}
Let $T=\bigoplus\limits_{ i\in I}T_{i}$ be a tilting object in $\crd$ with indecomposable direct summand $T_{i}\in \crd[a, \alpha(a)]$ for some $a\in \mathbb{Q}$.
Assume that $I_{1}=\{i\in I|\mu T_{i}=a\}$ and $I_{2}=\{i\in I|\mu T_{i}=\alpha(a)\}.$ Then $T^{\prime}=(\bigoplus\limits_{i\in I_{1}}\tau^{-1}T_{i}[1])\oplus(\bigoplus\limits_{j\in I\backslash I_{1}}T_{j})$ is a tilting object in $\crd$ if and only if $\crd(T_{i_{2}}, T_{i_{1}}(\vec{c}-\vec{\omega}))=0$ for any $i_{1}\in I_{1}$ and $i_{2}\in I_{2}.$
\end{proposition}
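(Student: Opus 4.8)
The plan is to reduce the statement to a summand-by-summand extension-vanishing check, exactly in the spirit of Lemma~\ref{lemma4}, and then to verify that the generation property is automatically preserved. First I would observe that $T$ and $T'$ have the same indecomposable summands except that those $T_i$ with $i\in I_1$ are replaced by $\tau^{-1}T_i[1]$; since $\tau^{-1}[1]$ is an autoequivalence of $\crd$, the subcategory $\langle T'\rangle$ equals $\tau^{-1}\langle T\rangle[1]=\tau^{-1}\crd[1]=\crd$, so $T'$ always generates. Hence the content of the proposition is purely the extension-freeness of $T'$, and by Lemma~\ref{lemma6} (applied after noting that all summands of $T'$ again lie in a single interval of the form $\crd(b,\alpha(b)]$ — here one must check that applying $\tau^{-1}[1]$ to the slope-$a$ summands lands them, together with the untouched summands, inside $\crd(a',\alpha(a')]$ for a suitable $a'$, using $\mu(\tau^{-1}X[1])=\mu(X[1])=\alpha(\mu X)$ and the monotonicity of $\alpha$) it suffices to check $\crd(T',T'[1])=0$.

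Next I would split $\crd(T',T'[1])$ into blocks according to whether each argument is an unmoved summand $T_j$ ($j\in I\setminus I_1$) or a moved summand $\tau^{-1}T_i[1]$ ($i\in I_1$). The block $\crd(T_j,T_{j'}[1])$ for $j,j'\notin I_1$ vanishes because $T$ is extension-free. The mixed blocks $\crd(\tau^{-1}T_i[1],T_j[1])$ and $\crd(T_j,\tau^{-1}T_i[2])$, and the block $\crd(\tau^{-1}T_i[1],\tau^{-1}T_{i'}[2])\cong\crd(T_i,T_{i'}[1])$, are handled by the same slope/semistability argument used in Lemma~\ref{lemma4}: for summands whose slopes differ by less than one full $\alpha$-step, $\crd(X,Y[n])$ can only be nonzero for $n\in\{0,1\}$, and the relevant Hom's reduce via Serre duality $\crd(X,Y[1])\cong D\crd(Y,X(\vec\omega))$ and the extension-freeness of $T$ to either $0$ or to terms that vanish because $T$ is a tilting object (in particular $\crd(\tau^{-1}T_i[1],T_j[1])\cong D\crd(T_j,T_i[1])=0$). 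The only block that does not automatically collapse is $\crd(\tau^{-1}T_i[1],(\tau^{-1}T_{i'}[1])[1])$ when $i,i'\in I_1$ versus — more importantly — the interaction between a moved slope-$a$ summand and an unmoved slope-$\alpha(a)$ summand, i.e. $i_1\in I_1$, $i_2\in I_2$: one computes $\crd(T_{i_2},(\tau^{-1}T_{i_1}[1])[1])=\crd(T_{i_2},\tau^{-1}T_{i_1}[2])=\crd(T_{i_2},T_{i_1}(\vec c-\vec\omega))$, using $\tau^{-1}=(\vec\omega)^{-1}$ (that is, $\tau X=X(\vec\omega)$) and $[2]$ acting as the appropriate twist, together with $\mu T_{i_2}=\alpha(a)=\alpha(\mu T_{i_1})$ so that this is exactly the borderline degree where a nonzero Hom can survive. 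This is the one obstruction, and it is precisely the stated condition $\crd(T_{i_2},T_{i_1}(\vec c-\vec\omega))=0$.

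Conversely, if that condition holds for all $i_1\in I_1$, $i_2\in I_2$, all the blocks above vanish, so $\crd(T',T'[1])=0$, hence $T'$ is extension-free and therefore (being a generator) a tilting object. I expect the main obstacle — and the step deserving the most care in the write-up — to be the bookkeeping of which pairs of slopes can produce nonzero $\crd(-,-[1])$ or $\crd(-,-[2])$ terms: one needs the precise statement that for indecomposables $X,Y$ with $\mu X,\mu Y\in[a,\alpha(a)]$ one has $\crd(X,Y[n])=0$ unless $n\in\{0,1\}$ (from (\cite{KLM2}, Theorem A.6) and semistability, as invoked in Lemma~\ref{lemma4}), and the careful identification of $(\tau^{-1}[1])[1]$ on objects with the line-bundle twist by $\vec c-\vec\omega$, so that the boundary case $\mu T_{i_2}=\alpha(\mu T_{i_1})$ is correctly singled out. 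Once that dictionary is fixed, every block is dispatched either by extension-freeness of $T$, by Serre duality, or by the hypothesis, and the converse direction is immediate since the $(i_2,i_1)$-block is literally one of the summands of $\crd(T',T'[1])$.
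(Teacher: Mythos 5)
Your treatment of extension-freeness is essentially the paper's argument: all summands of $T'$ lie in $\crd(a,\alpha(a)]$, the non-boundary blocks are killed by the slope/semistability vanishing of (\cite{KLM2}, Theorem A.6) together with Serre duality and the extension-freeness of $T$ (in particular $\crd(\tau^{-1}T_{i}[1],T_{j}[1])\cong D\crd(T_{j},T_{i}[1])=0$), and the only surviving block is the boundary pair $i_1\in I_1$, $i_2\in I_2$, where the identification $(\tau^{-1}T_{i_1}[1])[1]=T_{i_1}(\vec{c}-\vec{\omega})$ (i.e.\ Lemma \ref{lemma4}, which you could simply invoke for the pair $(T_{i_1},T_{i_2})$) produces exactly the stated condition. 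That half, including the converse direction, is correct and matches the paper.

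The genuine gap is your generation step. The equality $\langle T'\rangle=\tau^{-1}\langle T\rangle[1]$ is false as stated: $T'$ is \emph{not} $\tau^{-1}T[1]$, since only the summands indexed by $I_1$ are twisted, and a thick subcategory generated by a mixture of twisted and untwisted summands has no reason to coincide with the twist of $\langle T\rangle$ — in particular there is no a priori reason that $T_{i_1}\in\langle T'\rangle$ for $i_1\in I_1$. So your ``if'' direction proves extension-freeness but not that $T'$ generates $\crd$, which is part of being tilting. The paper closes this point by a different observation: using the same slope vanishing you already invoke (morphisms between summands of $T'$ only go from smaller to larger slope) plus extension-freeness, the indecomposable direct summands of $T'$ can be arranged into a complete exceptional sequence, and a complete exceptional sequence generates $\crd$; hence extension-freeness alone already forces $T'$ to be tilting. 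You would need to reproduce this exceptional-sequence argument (or some other genuine reason for generation); the autoequivalence claim does not supply it.
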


\begin{proof}
Notice that each direct summand of $T^{\prime}$ belongs to $\crd(a, \alpha(a)]$. Hence by (\cite{KLM2}, Theorem A.6) and the semi-stability of vector bundles,
for any $i\in I_{1}$ and $t\in I\backslash(I_{1}\cup I_{2})$, we have $\crd(T_{t}, (\tau^{-1}T_{i}[1])[n])=0$ for any $n\neq 0$
and $\crd(\tau^{-1}T_{i}[1], T_{t}[n])=0$ for any $n\neq 1$.
Moreover, if $n=1$, then by Serre duality, we have $\crd(\tau^{-1}T_{i}[1], T_{t}[1])=D\crd(T_{t}, T_{i}[1])=0$.
Thus, $T^{\prime}$ is extension-free if and only if $\tau^{-1}T_{i_{1}}[1]\oplus T_{i_{2}}$ is extension-free for any $i_{1}\in I_{1}$ and $i_{2}\in I_{2}$, that is,
$\crd(T_{i_{2}}, T_{i_{1}}(\vec{c}-\vec{\omega}))=0$ by the preceding lemma.
Notice that the direct summand of $T^{\prime}$ can be arranged as a complete exceptional sequence, thus $T^{\prime}$ is automatically a tilting object.
\end{proof}

Since $T_{\cub}$ is a tilting object in $\crd$, the direct summands of $T_{\cub}$ belong to $\crd[0, \alpha(0)]$, and only one of them has minimal (respectively, maximal) slope, that is, $\mu E=0$ and $\mu(E\langle 2\vec{\omega}+\vec{c} \rangle)=\frac{\delta(\vec{c})}{2}=\alpha(0)$. Then by Proposition \ref{proposition5}, we have the following results.

\begin{corollary}\label{corollary6}
Let $T^{\prime}=\tau^{-1}E[1]\oplus(\bigoplus\limits_{0<\vec{x}\leq 2\vec{\omega}+\vec{c}}E\langle\vec{x}\rangle)$.
\begin{itemize}
\item[(1)] If weight type of $\X$ is $(2,3,6)$ or $(2,4,4)$, then $T^{\prime}$ is a tilting object in $\crd$.
\item[(2)] If weight type of $\X$ is $(3,3,3)$, then $T^{\prime}$ is not a tilting object in $\crd$.
\end{itemize}
 \end{corollary}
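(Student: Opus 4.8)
The plan is to apply Proposition~\ref{proposition5} directly, with $T=T_{\cub}$, $a=0$, and $I_1=\{0\}$ (the single index for $E=E\langle 0\rangle$, the unique summand of slope $0$), $I_2=\{2\vec\omega+\vec c\}$ (the single index for $E\langle 2\vec\omega+\vec c\rangle$, the unique summand of slope $\alpha(0)$). The content of $T_{\cub}$ having a unique summand of minimal slope and a unique summand of maximal slope was already noted in the paragraph preceding the corollary, so this setup is legitimate. Under this identification, $T'$ as defined in the corollary is exactly the object $T'$ produced by Proposition~\ref{proposition5}, and the proposition reduces the whole question to a single vanishing condition: whether
\[
\crd\bigl(E\langle 2\vec\omega+\vec c\rangle,\ E(\vec c-\vec\omega)\bigr)=0.
\]
So both parts (1) and (2) come down to computing one homomorphism space in $\crd$, which I would do using the description of $\Hom$-spaces between extension bundles from \cite{KLM2} (together with Serre duality $\crd(X,Y)\cong D\crd(Y,X(\vec\omega))$ and Lemma~\ref{lemma1} when normalization of the twist parameter is needed).

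Concretely, I would first rewrite $E(\vec c-\vec\omega)$ as an extension bundle $E\langle\vec z\rangle(\vec w)$ by using the relation $\vec\omega=\vec c-\vec x_1-\vec x_2-\vec x_3$ to get $\vec c-\vec\omega=\vec x_1+\vec x_2+\vec x_3$, and then apply the self-duality/shift identities for $E\langle-\rangle$ (Lemma~\ref{lemma1}, or the suspension formula from \cite{KLM2}) to bring it into the normalized form $E_{L}\langle\vec y\rangle$ with $0\le y_i\le p_i-2$ for an appropriate line bundle $L$. Then I would invoke the $\Hom$-formula for extension bundles: $\crd(E\langle\vec x\rangle, E\langle\vec y\rangle(\vec z))$ is computed from the combinatorics of the ``cuboid'' $[0,2\vec\omega+\vec c]$ and the positions of the two parameter vectors, and is nonzero precisely under an explicit numerical criterion. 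Plugging in the specific weight triples $(2,4,4)$, $(2,3,6)$, $(3,3,3)$ then gives a finite check in each case.

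I expect the main obstacle to be purely bookkeeping: correctly normalizing $E(\vec c-\vec\omega)$ as an extension bundle and then reading off the $\Hom$-space from the \cite{KLM2} formulas without sign or index errors, since these formulas are sensitive to whether the relevant lattice vectors lie inside the fundamental domain. A useful sanity check along the way: by Serre duality the condition $\crd(E\langle 2\vec\omega+\vec c\rangle, E(\vec c-\vec\omega))=0$ is equivalent to $\crd(E, E\langle 2\vec\omega+\vec c\rangle(\vec\omega-\vec c+\vec\omega))=0$, i.e. a condition of the form $\crd(E,E(\vec u))=0$, and by the recalled fact at the start of Section~3 this holds if and only if $\vec u\notin\{0,\overline x_1,\overline x_2,\overline x_3\}$ in the relevant normalized sense. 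So the computation ultimately reduces to deciding, for each weight type, whether the single lattice element $\vec u$ arising this way coincides with one of the four distinguished classes $0,\overline x_1,\overline x_2,\overline x_3$; I anticipate it does not for $(2,3,6)$ and $(2,4,4)$ (giving vanishing, hence $T'$ is tilting) but does for $(3,3,3)$ (giving a nonzero $\Hom$, hence the extension-freeness fails and $T'$ is not tilting). Once this single discrepancy between the three cases is pinned down, Proposition~\ref{proposition5} delivers both statements immediately.
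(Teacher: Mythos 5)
Your reduction is exactly the paper's: with $T=T_{\cub}$, $a=0$, and the unique summands $E$ of slope $0$ and $E\langle 2\vec{\omega}+\vec{c}\rangle$ of slope $\alpha(0)$, Proposition \ref{proposition5} turns both parts into the single question whether $\crd(E\langle 2\vec{\omega}+\vec{c}\rangle, E(\vec{c}-\vec{\omega}))$ vanishes. The gap is that you never actually decide this question: you only ``anticipate'' the answer, and the one concrete shortcut you propose does not work as stated. First, your ``sanity check'' uses Serre duality without the shift: in $\crd$ the duality is $\crd(X,Y[1])\cong D\crd(Y,X(\vec{\omega}))$, so the plain space $\crd(E\langle 2\vec{\omega}+\vec{c}\rangle, E(\vec{c}-\vec{\omega}))$ is dual to $\crd(E(\vec{c}-\vec{\omega}), E\langle 2\vec{\omega}+\vec{c}\rangle(\vec{\omega})[1])$, not to a shift-free space $\crd(E,\,\cdot\,)$; dropping the $[1]$ changes the object on the right by a suspension, which on extension bundles is not a line-bundle twist. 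Second, the reduction to a condition of the form ``$\crd(E,E(\vec{u}))=0$ iff $\vec{u}\notin\{0,\overline{x}_1,\overline{x}_2,\overline{x}_3\}$'' presupposes that $E\langle 2\vec{\omega}+\vec{c}\rangle$ is (up to twist) an Auslander bundle. By Lemma \ref{lemma1} (or \cite{KLM2}, Prop.\ 4.15) this holds exactly when some weight equals $2$, so it is fine for $(2,4,4)$ and $(2,3,6)$, where indeed $E\langle 2\vec{\omega}+\vec{c}\rangle=E((p_2-1)\vec{x}_2+(p_3-1)\vec{x}_3-\vec{c})$ and the criterion gives vanishing; but it fails for $(3,3,3)$, where $E\langle \vec{x}_1+\vec{x}_2+\vec{x}_3\rangle=(E[1])(\vec{\omega})$ is not a twist of $E$. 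So your scheme breaks down precisely in the case that distinguishes (2) from (1).

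To close the gap in case (2) you need the identification $E\langle 2\vec{\omega}+\vec{c}\rangle=(E[1])(\vec{\omega})=\tau E[1]$ together with a correctly shifted use of Serre duality: $\crd(\tau E[1], E(\vec{c}-\vec{\omega}))=\crd(\tau E[2], E(\vec{c}-\vec{\omega})[1])=D\crd(E(\vec{c}-\vec{\omega}), E(2\vec{\omega}+\vec{c}))=D\crd(E,E)\neq 0$, which is how the paper obtains nonvanishing and hence that $T'$ is not tilting for $(3,3,3)$. Alternatively, a general Hom-formula for pairs of extension bundles would suffice, but then you must actually carry out that finite check rather than defer it; as written, the decisive computation, and in particular the nonvanishing statement for $(3,3,3)$, is missing.
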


\begin{proof}
(1) If $\X$ has weight type $(2,3,6)$ or $(2,4,4)$, then by (\cite{KLM2}, Proposition 4.15), $E\langle 2\vec{\omega}+\vec{c} \rangle=E((p_{2}-1)\vec{x}_{2}+(p_{3}-1)\vec{x}_{3}-\vec{c})$. Thus
$\crd(E\langle 2\vec{\omega}+\vec{c} \rangle, E(\vec{c}-\vec{\omega}))=\crd(E, E(\vec{x}_{1}+2\vec{x}_{2}+2\vec{x}_{3}-\vec{c}))=0$ since
$\vec{x}_{1}+2\vec{x}_{2}+2\vec{x}_{3}-\vec{c}\neq 0$ or $\overline{x}_{i}$ for $1\leq i\leq 3$. Then by Proposition \ref{proposition5}, $T^{\prime}$ is a tilting object in
$\crd$.

(2) If $\X$ has weight type $(3,3,3)$, then by (\cite{KLM2}, Proposition 4.15),
$E\langle 2\vec{\omega}+\vec{c} \rangle=E\langle\vec{x}_{1}+\vec{x}_{2}+\vec{x}_{3}\rangle=(E[1])(\vec{\omega})$. Thus
$\crd(E\langle 2\vec{\omega}+\vec{c} \rangle, E(\vec{c}-\vec{\omega}))=\crd(\tau E[1], E(\vec{c}-\vec{\omega}))=
\crd(\tau E[2], E(\vec{c}-\vec{\omega})[1])=D\crd(E(\vec{c}-\vec{\omega}), E(2\vec{\omega}+\vec{c}))=D\crd(E, E)\neq 0.$
By Proposition \ref{proposition5}, $T^{\prime}$ is not a tilting object in
$\crd$.
\end{proof}

Now we begin to construct a tubular tilting object in $\crd$ case by case. Firstly, we consider $\X$ of weight type $(2,4,4)$. The main strategy is to use the cluster mutation. More precisely, we first construct an original cluster tilting object in $\crc$ from $T_{\cub}$, then by a sequence of tubular mutations based on Keller's soft-ware we deduce a tilting object in $\crd$ with tubular endomorphism algebra.

\begin{theorem}\label{theorem7}
Assume that $\X$ has weight type $(2,4,4)$. Then there exists a tilting object in $\crd$:
$T_{(2,4,4)}=\tau^{-1}(E^{\ast}\oplus E\langle\vec{x}_{2}\rangle \oplus E\langle\vec{x}_{3}\rangle \oplus E(-\vec{\omega}))[1]\oplus
(E\langle 2\vec{x}_{2}+\vec{x}_{3}\rangle \oplus E\langle \vec{x}_{2}+2\vec{x}_{3}\rangle \oplus E(\vec{x}_{1}-\vec{x}_{2}+\vec{x}_{3})\oplus E(\vec{x}_{1}+\vec{x}_{2}-\vec{x}_{3})\oplus E\langle 2\vec{x}_{2}+2\vec{x}_{3}\rangle^{\ast})$, whose endomorphism algebra is a tubular algebra of type $(2,4,4)$,
where $E^{\ast}$ and $E\langle 2\vec{x}_{2}+2\vec{x}_{3}\rangle^{\ast}$ are determined by the following exact sequences:
\[ 0\to E\langle\vec{x}_{3}\rangle\to E^{\ast}\to \co(\vec{x}_{2})\to 0,
\]
and
\[ 0\to E\langle\vec{x}_{2}+\vec{x}_{3}\rangle\to E\langle 2\vec{x}_{2}+2\vec{x}_{3}\rangle^{\ast}\to \co(2\vec{x}_{2}+2\vec{x}_{3}-\vec{x}_{1})\to 0.
\]
 \end{theorem}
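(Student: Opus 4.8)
The plan is to follow the roadmap announced just before the statement: produce an explicit cluster tilting object in $\crc$, then perform a controlled sequence of cluster mutations (tracked via Lemma~\ref{theorem BIRS} and Keller's quiver-mutation software) until the endomorphism quiver becomes the canonical tubular quiver of type $(2,4,4)$, and finally lift the resulting cluster tilting object back to a tilting object in $\crd$ by checking that all its indecomposable summands lie in a single interval $\crd(a,\alpha(a)]$ and invoking Lemma~\ref{lemma9}. Concretely, I would start from $T_{\cub}=T_{\cub}(\co)$, which by Lemma~\ref{lemma7} is a tilting object in $\crd$ with $\underline{\End}(T_{\cub})\cong k\vec A_1\otimes k\vec A_3\otimes k\vec A_3$, and whose eight indecomposable summands $E\langle\vx\rangle$, $0\le\vx\le 2\vec\omega+\vc=\vx_2+\vx_3+(\text{stuff})$, all lie in $\crd[0,\alpha(0)]$. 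By Lemma~\ref{lemma9} this $T_{\cub}$ is then a cluster tilting object in $\crc$; one has to note that the passage $\crd\to\crc$ may create extra arrows/relations, so the quiver $Q_{T_{\cub}}$ of $\End_\crc(T_{\cub})$ must be computed afresh — this is where the hypothesis ``$T_{\cub}$ is a cluster tilting object for weight type $(2,4,4)$'' from the introduction is used.

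The core of the argument is then a finite bookkeeping exercise. First I would apply Proposition~\ref{proposition5} (in the form of Corollary~\ref{corollary6}(1)) to replace the unique minimal-slope summand $E$ of $T_{\cub}$ by $\tau^{-1}E[1]$, legitimate here precisely because $\crd(E\langle 2\vec\omega+\vc\rangle,E(\vc-\vec\omega))=0$ in weight type $(2,4,4)$; call the result $T'$. Then, using Lemma~\ref{cluster mutation} and the exchange triangles, I would mutate $T'$ at a carefully chosen sequence of vertices. At each step the new exchanged summand $T_i^*$ is identified via its exchange triangle $T_i\to B\to T_i^*\to T_i[1]$ with $B$ a sum of the other current summands; the triangles of Section~3 — especially Propositions~\ref{proposition1}, \ref{proposition2}, \ref{proposition3} — are exactly the tool that lets me recognize these mutated objects as concrete extension bundles or as the ``starred'' bundles $E^\ast$, $E\langle 2\vx_2+2\vx_3\rangle^\ast$ defined by the two short exact sequences in the statement. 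Keller's software confirms that after the chosen mutation sequence the quiver and relations of the endomorphism algebra are those of the canonical tubular algebra of type $(2,4,4)$; Lemma~\ref{theorem BIRS} guarantees the combinatorial mutation of quivers matches the categorical mutation as long as no loops or $2$-cycles appear at the mutated vertices, which one checks along the way.

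To finish, I would assemble the explicit object $T_{(2,4,4)}$ displayed in the statement: its four ``shifted'' summands $\tau^{-1}(E^\ast\oplus E\langle\vx_2\rangle\oplus E\langle\vx_3\rangle\oplus E(-\vec\omega))[1]$ together with the five bundle summands all have slopes in one fundamental interval $\crd(a,\alpha(a)]$ (the $\tau^{-1}(-)[1]$ summands sit at the top, the others strictly inside), so Lemma~\ref{lemma6} reduces extension-freeness to the single condition $\crd(T_{(2,4,4)},T_{(2,4,4)}[1])=0$, which holds because $T_{(2,4,4)}$ is a cluster tilting object in $\crc$; and then Lemma~\ref{lemma9} upgrades ``cluster tilting in $\crc$'' to ``tilting in $\crd$''. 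The endomorphism algebra is tubular of type $(2,4,4)$ by the quiver computation, and the $\mathrm{Ext}$-vanishing between $T_{(2,4,4)}$ and itself also shows the endomorphism ring has the correct (canonical-algebra) relations rather than merely the correct quiver.

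The main obstacle I anticipate is not any single deep fact but the identification of the intermediate mutated summands: each mutation requires knowing the minimal $\add$-approximations $u\colon T_i\to B$ and $v\colon B\to T_i^*$, and hence precise control of the Hom-spaces $\crd(E\langle\vx\rangle,E\langle\vy\rangle)$ and of how the triangles of Section~3 glue together; verifying that the cokernel/kernel objects are indecomposable (as in the indecomposability arguments inside Propositions~\ref{proposition1} and \ref{proposition2}) and that they coincide with $E^\ast$ and $E\langle2\vx_2+2\vx_3\rangle^\ast$ is the delicate, computation-heavy part. Keeping track that no loops or $2$-cycles spoil the hypotheses of Lemma~\ref{theorem BIRS} at each of the mutation steps is the second point that needs genuine care rather than routine checking.
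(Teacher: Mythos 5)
Your proposal follows essentially the same route as the paper: replace $E$ by $\tau^{-1}E[1]$ via Corollary~\ref{corollary6}(1) to obtain a cluster tilting object in $\crc$, perform the five mutations tracked by Keller's software and Lemma~\ref{theorem BIRS}, identify the exchanged summands through the exchange triangles of Lemma~\ref{cluster mutation} together with Lemma~\ref{lemma1} and the triangles of Section~3, and descend to a tilting object in $\crd$ via Lemma~\ref{lemma9} once all summands lie in $\crd(\tfrac{2}{3},\alpha(\tfrac{2}{3})]$. The only caveats are minor: $T_{\cub}$ has nine (not eight) indecomposable summands in this weight type, and the identification $E\langle\vec{x}_{2}+\vec{x}_{3}\rangle^{\ast}=E(-\vec{\omega})$, which you rightly single out as the computation-heavy step, is done in the paper by an explicit pullback along the projective cover of $E\langle\vec{x}_{2}+\vec{x}_{3}\rangle$.
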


\begin{proof}
By Corollary \ref{corollary6}, we get that $T^{\prime}=(\tau^{-1}E[1])\oplus(\bigoplus\limits_{0<\vec{x}\leq 2\vec{\omega}+\vec{c}}E\langle\vec{x}\rangle)$ is a tilting object in $\crd$, and each direct summand of $T^{\prime}$ belongs to $\crd(0, \alpha(0)].$ Hence by Lemma \ref{lemma9},
$T^{\prime}$ is a cluster tilting object in $\crc$. Notice that $T^{\prime}=T_{\cub}$ in $\crc$, the Gabriel quiver of the endomorphism algebra $\crc(T^{\prime}, T^{\prime})$ has the form:
\[
\xymatrix{
E\ar[r]\ar[d]&E\langle\vec{x}_{2}\rangle \ar[r]\ar[d]& E\langle 2\vec{x}_{2}\rangle \ar[d] \\
 E\langle\vec{x}_{3}\rangle\ar[r]\ar[d]&E\langle\vec{x}_{2}+\vec{x}_{3}\rangle\ar[lu]\ar[r]\ar[d]&E\langle 2\vec{x}_{2}+\vec{x}_{3}\rangle\ar[lu]\ar[d]\\
  E\langle2\vec{x}_{3}\rangle\ar[r]&E\langle\vec{x}_{2}+2\vec{x}_{3}\rangle\ar[lu]\ar[r]&E\langle 2\vec{x}_{2}+2\vec{x}_{3}\rangle.\ar[lu].
 }
\]
Using Keller's soft-ware, we know that under taking quiver mutations $u_{1}u_{2}u_{3}u_{4}u_{5}$ for the quiver
\[
\xymatrix{
3\ar[r]\ar[d]&\cdot \ar[r]\ar[d]& 1 \ar[d] \\
\cdot\ar[r]\ar[d]&5\ar[lu]\ar[r]\ar[d]&\cdot\ar[lu]\ar[d]\\
 2\ar[r]&\cdot\ar[lu]\ar[r]&4\ar[lu],
 }
\]
we can obtain the following quiver
\[
\xymatrix{
3\ar@/_/[rrdd]&\cdot \ar[l]& 1\ar[l]  \\
\cdot\ar[u]&5\ar[lu]&\cdot\ar[u]\\
 2\ar[u]&\cdot\ar[l]&4\ar[lu]\ar[l]\ar[u].
 }
\]
By Lemma \ref{theorem BIRS}, the quiver mutation $u_{i}$ corresponds to the cluster tilting mutation in the triangulated category $\crc$ at the corresponding indecomposable direct summand of $T'.$ In more details, the cluster tilting mutation corresponding to $u_{i}$ for $1\leq i\leq 5$ is given as follows.

(1) $u_{1}$ corresponds to the following triangle
\[ E\langle2\vec{x}_{2}\rangle\to E\langle 2\vec{x}_{2}+\vec{x}_{3}\rangle\to E\langle 2\vec{x}_{2}\rangle^{\ast}\to  E\langle2\vec{x}_{2}\rangle[1],
\]
where $E\langle 2\vec{x}_{2}\rangle^{\ast}=E\langle2\vec{x}_{2}\rangle(\vec{x}_{3})=E(\vec{x}_{1}-\vec{x}_{2}+\vec{x}_{3})$ by Lemma \ref{lemma1};

(2) $u_{2}$ corresponds to the following triangle
\[ E\langle2\vec{x}_{3}\rangle\to E\langle \vec{x}_{2}+2\vec{x}_{3}\rangle\to E\langle 2\vec{x}_{3}\rangle^{\ast}\to  E\langle2\vec{x}_{3}\rangle[1],
\]
where $E\langle 2\vec{x}_{3}\rangle^{\ast}=E\langle2\vec{x}_{3}\rangle(\vec{x}_{2})=E(\vec{x}_{1}+\vec{x}_{2}-\vec{x}_{3})$ by Lemma \ref{lemma1};

(3) $u_{3}$ corresponds to the following triangle
\[ E \to E\langle \vec{x}_{2}\rangle\oplus E\langle \vec{x}_{3}\rangle\to E^{\ast}\to  E[1],
\]

where $E^{\ast}$ is determined by the non-split exact sequence
\begin{equation}\label{equation1} 0\to E\langle \vec{x}_{3}\rangle\to E^{\ast}\to \co(\vec{x}_{2})\to 0;
\end{equation}

(4) $u_{4}$ corresponds to the following triangle
\[ E\langle2\vec{x}_{2}+2\vec{x}_{3}\rangle^{\ast} \to E\langle \vec{x}_{2}+2\vec{x}_{3}\rangle\oplus E\langle 2\vec{x}_{2}+\vec{x}_{3}\rangle\to E\langle 2\vec{x}_{2}+2\vec{x}_{3}\rangle,
\]
where $E\langle2\vec{x}_{2}+2\vec{x}_{3}\rangle^{\ast}$ is determined by the non-split exact sequence
\[ 0\to E\langle \vec{x}_{2}+\vec{x}_{3}\rangle\to E\langle 2\vec{x}_{2}+2\vec{x}_{3}\rangle^{\ast}\to \co(2\vec{x}_{2}+2\vec{x}_{3}-\vec{x}_{1})\to 0;
\]

(5) $u_{5}$ corresponds to the following triangle
\begin{equation}\label{equation2} E\langle \vec{x}_{2}+\vec{x}_{3}\rangle[-1] \to E\langle \vec{x}_{2}+\vec{x}_{3}\rangle^{\ast}\to E^{\ast}\xrightarrow{\varphi} E\langle \vec{x}_{2}+\vec{x}_{3}\rangle.
\end{equation}
Now we claim that $ E\langle \vec{x}_{2}+\vec{x}_{3}\rangle^{\ast}=E(-\vec{\omega}).$ In fact, from (\ref{equation1}), we have $\mu(E^{\ast})=\frac{2}{3}$,
then by comparing the slopes of $E^{\ast}$ and $E\langle \vec{x}_{2}+\vec{x}_{3}\rangle$, we obtain that the nonzero morphism $\varphi$ in $\coh\X$ is surjective.
Moreover, $\det(\ker \varphi)=\det(E^{\ast})-\det(E\langle \vec{x}_{2}+\vec{x}_{3}\rangle)=0$ implies that $\ker \varphi=0.$ Thus we get an exact sequence
\begin{equation}\label{equation3}0 \to \co \to E^{\ast}\xrightarrow{\varphi} E\langle \vec{x}_{2}+\vec{x}_{3}\rangle \to 0.
\end{equation}
Taking pullback along (\ref{equation3}), we get the following commutative diagram
\begin{equation}\label{equation4}
\xymatrix{
0\ar[r]&\co\ar@{=}[d]\ar[r]&F\ar[d]\ar[r] & P(E\langle \vec{x}_{2}+\vec{x}_{3}\rangle) \ar[d]\ar[r] & 0 \\
0\ar[r]&\co\ar[r]& E^{\ast}\ar[r]^{\varphi \ \ \ }&E\langle \vec{x}_{2}+\vec{x}_{3}\rangle\ar[r]& 0.
 }
\end{equation}
Recall that $P(E\langle \vec{x}_{2}+\vec{x}_{3}\rangle)=\co(\vec{\omega})\oplus \co(\vec{x}_{2}-\vec{x}_{3})\oplus \co(\vec{x}_{3}-\vec{x}_{2})\oplus \co(-\vec{\omega}).$ It is easy to check that all the indecomposable summands but $\co(-\vec{\omega})$ of $P(E\langle \vec{x}_{2}+\vec{x}_{3}\rangle)$ vanish under
the functor $\Ext^{1}(-, \co).$ That is, the other three direct summands of $P(E\langle \vec{x}_{2}+\vec{x}_{3}\rangle)$ are direct summands of $F$. It follows that
$F=\co(\vec{\omega})\oplus \co(\vec{x}_{2}-\vec{x}_{3})\oplus \co(\vec{x}_{3}-\vec{x}_{2})\oplus E(-\vec{\omega})$. Under factoring line bundle summands in the
distinguished exact sequence
\[ 0\to F \to E^{\ast}\oplus P(E\langle \vec{x}_{2}+\vec{x}_{3}\rangle)\to E\langle \vec{x}_{2}+\vec{x}_{3}\rangle\to 0,
\]
we get a triangle
\[  E\langle \vec{x}_{2}+\vec{x}_{3}\rangle[-1]\to E(-\vec{\omega}) \to E^{\ast}\xrightarrow{\varphi}  E\langle \vec{x}_{2}+\vec{x}_{3}\rangle.
\]
Comparing with (\ref{equation2}), we get $E\langle \vec{x}_{2}+\vec{x}_{3}\rangle^{\ast}=E(-\vec{\omega}),$ as claimed.

Therefore, we obtain a cluster tilting object in $\crc$ with endomorphism algebra of the Gabriel quiver
\[
\xymatrix{
E^{\ast}\ar@/_/[rrdd]&E\langle\vec{x}_{2}\rangle \ar[l]& E(\vec{x}_{1}-\vec{x}_{2}+\vec{x}_{3}) \ar[l] \\
 E\langle\vec{x}_{3}\rangle\ar[u]&E(-\vec{\omega})\ar[lu]&E\langle 2\vec{x}_{2}+\vec{x}_{3}\rangle\ar[u]\\
  E(\vec{x}_{1}+\vec{x}_{2}-\vec{x}_{3})\ar[u]&E\langle\vec{x}_{2}+2\vec{x}_{3}\rangle\ar[l]&E\langle 2\vec{x}_{2}+2\vec{x}_{3}\rangle^{\ast}\ar[lu]\ar[l]\ar[u].
 }
\]
Let $T_{(2,4,4)}=\tau^{-1}(E^{\ast}\oplus E\langle\vec{x}_{2}\rangle \oplus E\langle\vec{x}_{3}\rangle \oplus E(-\vec{\omega}))[1]\oplus
(E(\vec{x}_{1}-\vec{x}_{2}+\vec{x}_{3})\oplus E(\vec{x}_{1}+\vec{x}_{2}-\vec{x}_{3})\oplus E\langle 2\vec{x}_{2}+\vec{x}_{3}\rangle \oplus E\langle \vec{x}_{2}+2\vec{x}_{3}\rangle \oplus  E\langle 2\vec{x}_{2}+2\vec{x}_{3}\rangle^{\ast})$.
Then $T_{(2,4,4)}$ is a cluster tilting object in $\crc$ with each direct summand belongs to the interval category $\crd(\frac{2}{3}, \alpha(\frac{2}{3})]$.
Hence by Lemma \ref{lemma9}, $T_{(2,4,4)}$ is a tilting object in $\crd$ with endomorphism algebra:
\[
\xymatrix{
\tau^{-1}E^{\ast}[1]&\tau^{-1}E\langle\vec{x}_{2}\rangle[1] \ar[l]& E(\vec{x}_{1}-\vec{x}_{2}+\vec{x}_{3}) \ar[l] \\
 \tau^{-1}E\langle\vec{x}_{3}\rangle[1]\ar[u]&\tau^{-1}E(-\vec{\omega})[1]\ar[lu]&E\langle 2\vec{x}_{2}+\vec{x}_{3}\rangle\ar[u]\\
  E(\vec{x}_{1}+\vec{x}_{2}-\vec{x}_{3})\ar[u]&E\langle\vec{x}_{2}+2\vec{x}_{3}\rangle\ar[l]&E\langle 2\vec{x}_{2}+2\vec{x}_{3}\rangle^{\ast},\ar[lu]\ar[l]\ar[u]
 }
\]
which is a tubular algebra of type $(2,4,4).$
\end{proof}

Next, we consider $\X$ of weight type $(2,3,6)$. Similarly, we start with the cluster tilting object $T_{\cub}$ and the Gabriel quiver of its endomorphism algebra. It is more complicated than  the weight type $(2,4,4)$ to obtain the desired tilting object by using the cluster mutation. We need use
the mutations of the quiver at some vertex twice. We omit the process of describing the corresponding cluster tilting mutations to quiver mutations in the proof.

\begin{theorem}\label{theorem24}
Assume that $\X$ has weight type $(2,3,6)$. Then there exists a tilting object $T_{(2,3,6)}$ in $\crd$, whose endomorphism algebra is a tubular
algebra of type $(2,3,6)$, here $T_{(2,3,6)}=\tau H[-1]\oplus E\langle 3\vec{x}_{3}\rangle \oplus E(3\vec{x}_{3})\oplus E(2\vec{x}_{2}-2\vec{x}_{3})
\oplus E\langle 4\vec{x}_{3}\rangle^{\ast\ast}\oplus E\langle \vec{x}_{2}+\vec{x}_{3}\rangle\oplus E\langle \vec{x}_{2}\rangle(\vec{x}_{3})\oplus \tau^{-1}E[1]\oplus \tau^{-1}E\langle \vec{x}_{3}\rangle[1]\oplus \tau^{-1}E\langle\vec{x}_{2}+2\vec{x}_{3}\rangle^{\ast}[1]$ with the Gabriel quiver of the endomorphism algebra given as follows:
\begin{equation}\label{equation37}
{\tiny
{\xymatrix@-1.3pc{
&E\langle 3\vec{x}_{3}\rangle\ar[rrrr] & & &&E(3\vec{x}_{3})\ar[rd] & \\
\tau H[-1]\ar[ru]\ar[rd]\ar[rrr]&&&E(2\vec{x}_{2}-2\vec{x}_{3})\ar[rrr]&&& \tau^{-1}E\langle\vec{x}_{2}+2\vec{x}_{3}\rangle^{\ast}[1],\\
&E\langle 4\vec{x}_{3}\rangle^{\ast\ast}\ar[r]& E\langle \vec{x}_{2}+\vec{x}_{3}\rangle\ar[r]&E\langle \vec{x}_{2}\rangle(\vec{x}_{3})\ar[r]&\tau^{-1}E[1]\ar[r]&\tau^{-1}E\langle \vec{x}_{3}\rangle[1]\ar[ru]\\
 }}}
\end{equation}
where  $E\langle\vec{x}_{2}+2\vec{x}_{3}\rangle^{\ast}$ is determined by the exact sequence
\[0\to E\langle \vec{x}_{3}\rangle \to E\langle\vec{x}_{2}+2\vec{x}_{3}\rangle^{\ast}\to \co(\vec{x}_{2}+2\vec{x}_{3}-\vec{x}_{1}) \to 0,\]
$E\langle 4\vec{x}_{3}\rangle^{\ast\ast}$ is determined by the exact sequence
\[
0\to E\langle 2\vec{x}_{3}\rangle(\vec{\omega}) \to E\langle 4\vec{x}_{3}\rangle^{\ast\ast}\to \co(\vec{\omega}+\vec{x}_{2}) \to 0,
\]
$H$ is determined by the exact sequence
\begin{equation}
0\to E\langle 4\vec{x}_{3}\rangle (\vec{x}_{2})\to H\oplus\co(\vec{x}_{1}+\vec{x}_{2}+\vec{x}_{3})\to G \to 0,
\end{equation}
and $G$ is determined by the exact sequence
\[
0\to E\langle 2\vec{x}_{3}\rangle (\vec{x}_{1})\to G\to \co(\vec{x}_{1}+ \vec{x}_{2}) \to 0.
\]
\end{theorem}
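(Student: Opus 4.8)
The plan is to follow the blueprint of Theorem~\ref{theorem7}. First, by Corollary~\ref{corollary6}(1), the object $T'=(\tau^{-1}E[1])\oplus\big(\bigoplus_{0<\vec{x}\leq 2\vec{\omega}+\vec{c}}E\langle\vec{x}\rangle\big)$ is a tilting object in $\crd$, and all its indecomposable summands lie in the interval category $\crd(0,\alpha(0)]$; hence by Lemma~\ref{lemma9} it is a cluster tilting object in $\crc$. Since $T'$ and $T_{\cub}$ coincide in $\crc$, the Gabriel quiver of $\crc(T',T')$ is the quiver of $k\vec{A}_{1}\otimes k\vec{A}_{2}\otimes k\vec{A}_{5}$ together with the extra ``wrap-around'' arrows forced by the $2$-Calabi--Yau structure, just as in the $(2,4,4)$ case but on a $2\times 5$ grid. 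I would record this quiver explicitly; it has $10$ vertices, matching the $10$ summands of $T_{(2,3,6)}$.

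Next, using Keller's quiver-mutation software I would locate a sequence of quiver mutations carrying this quiver to~(\ref{equation37}); as indicated in the remark preceding the statement, this sequence mutates at one vertex twice. By Lemma~\ref{theorem BIRS} (after checking at each step that no loop or $2$-cycle appears at the relevant vertex, which is read off from the software output) each quiver mutation is realized by a cluster-tilting mutation $\crm_{T_i}$ in $\crc$, and by Lemma~\ref{cluster mutation} the exchanged summand $T_i^{*}$ sits in exchange triangles whose middle term is a minimal $\add$-approximation assembled from the current summands. For each step I would identify $T_i^{*}$ concretely: read the middle term off the arrows of the current quiver, observe that $T_i^{*}$ is again an extension bundle (hence, by \cite{KLM2}, determined by its class in $K_{0}(\X)$), compute that class from the exchange triangle, and convert any grading shift into the normal form given by Lemma~\ref{lemma1}. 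Several of these identifications are exactly the triangles established in Section~3: the summands of the form $E\langle\vec{y}\rangle^{\ast}$, $E\langle\vec{y}\rangle^{\ast\ast}$ arise as in cases~(2)--(3) of Proposition~\ref{proposition1} and in Proposition~\ref{proposition2}, while the two ``composite'' summands $G$ and $H$ come from Proposition~\ref{proposition2} and Proposition~\ref{proposition3} respectively (with the appropriate choice of $\vec{x}$, $j$, $k$), the extra line-bundle term $\co(\vec{x}_{1}+\vec{x}_{2}+\vec{x}_{3})$ in the defining sequence for $H$ being a projective--injective that splits off in $\crd$. In each case I would verify indecomposability of the relevant kernel or cokernel exactly as in Propositions~\ref{proposition1} and~\ref{proposition2}, namely by a $\Hom$/$\Ext$ argument with line bundles that produces a contradiction from an order inequality in $\mathbb{L}$.

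Finally, I would check that, with the representatives listed in the statement (the shifts $\tau H[-1]$, $\tau^{-1}E[1]$, $\tau^{-1}E\langle\vec{x}_{3}\rangle[1]$, $\tau^{-1}E\langle\vec{x}_{2}+2\vec{x}_{3}\rangle^{\ast}[1]$, and the remaining untwisted ones), all $10$ summands of $T_{(2,3,6)}$ lie in a single interval category $\crd(a,\alpha(a)]$; this is a slope computation using $\mu(X[1])=\alpha(\mu X)$ and the value $\alpha^{-1}(0)=-3$ for weight type $(2,3,6)$. Granting this, Lemma~\ref{lemma9} upgrades the cluster tilting object produced by the mutation sequence to a tilting object in $\crd$, and its endomorphism algebra has Gabriel quiver~(\ref{equation37}) with the commutativity relations inherited from the mutations; recognizing this bound quiver algebra as a tubular algebra of type $(2,3,6)$ (consistently with $\crd\simeq D^{b}(\coh\X)$, so that the endomorphism algebra of any tilting object is derived equivalent to the tubular canonical algebra) completes the proof.

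I expect the main obstacle to be the bookkeeping in the middle paragraph: the mutation sequence is longer than in the $(2,4,4)$ case and revisits a vertex, so the intermediate quivers and their approximation triangles must be tracked carefully, and the objects $G$ and $H$ each require a genuine two-step identification (first the exchange triangle, then the defining extension), each step carrying its own indecomposability check. The slope verification in the last paragraph is routine by comparison with the argument for Theorem~\ref{theorem7}.
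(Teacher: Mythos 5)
Your proposal follows essentially the same route as the paper: start from $T'=\tau^{-1}E[1]\oplus\bigl(\bigoplus_{0<\vec{x}\leq 2\vec{\omega}+\vec{c}}E\langle\vec{x}\rangle\bigr)$ as a cluster tilting object in $\crc$ via Corollary~\ref{corollary6} and Lemma~\ref{lemma9}, perform the quiver mutation sequence found with Keller's software (which indeed revisits one vertex), realize each step as a cluster-tilting mutation via Lemmas~\ref{cluster mutation} and~\ref{theorem BIRS} with the exchanged summands identified through the Section~3 triangles and $K_0$-classes, and finish with the slope/interval check and Lemma~\ref{lemma9}. This matches the paper's argument, which likewise reduces to the $(2,4,4)$ template and omits the detailed bookkeeping you flag as the main labor.
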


\begin{proof}
By Corollary \ref{corollary6}, we get that $T^{\prime}=\tau^{-1}E[1]\oplus(\bigoplus\limits_{0<\vec{x}\leq 2\vec{\omega}+\vec{c}}E\langle\vec{x}\rangle)$ is a tilting object in $\crd$, and each direct summand of $T^{\prime}$ belongs to $\crd(0, \alpha(0)]$. Hence, by Lemma \ref{lemma9}, $T^{\prime}=T_{\cub}$
in $\crc$ is a cluster tilting object. The Gabriel quiver of the endomorphism algebra $\crc(T^{\prime}, T^{\prime})$ has the form:
\[
\xymatrix{
E\ar[r]\ar[d]&E\langle\vec{x}_{3}\rangle\ar[r]\ar[d] & E\langle2\vec{x}_{3}\rangle\ar[r]\ar[d] &E\langle3\vec{x}_{3}\rangle\ar[r]\ar[d]&E\langle4\vec{x}_{3}\rangle\ar[d]   \\
E\langle\vec{x}_{2}\rangle\ar[r]&E\langle\vec{x}_{2}+\vec{x}_{3}\rangle\ar[r]\ar[lu] & E\langle\vec{x}_{2}+2\vec{x}_{3}\rangle\ar[r]\ar[lu] &E\langle\vec{x}_{2}+3\vec{x}_{3}\rangle\ar[r]\ar[lu]&E\langle\vec{x}_{2}+4\vec{x}_{3}\rangle.\ar[lu]\\
 }
\]
Using Keller's soft-ware, we know that under taking quiver mutations $u_{1}u_{2}u_{3}u_{4}u_{5}u_{6}u_{1}$ for the quiver
\[
\xymatrix{
a\ar[r]\ar[d]&b\ar[r]\ar[d] & 5\ar[r]\ar[d] &c\ar[r]\ar[d]&1\ar[d]   \\
3\ar[r]&d\ar[r]\ar[lu] & 4\ar[r]\ar[lu] &6\ar[r]\ar[lu]&2\ar[lu],\\
 }
\]
we can obtain the following quiver
\[
\xymatrix{
&c\ar[rrrr] & & &&5\ar[rd] & \\
6\ar[ru]\ar[rd]\ar[rrr]&&&2\ar[rrr]&&&4\ar@/^/[llllll]\\
&1\ar[r]& d\ar[r]&3\ar[r]&a\ar[r]&b\ar[ru]\\
 }
 \]
By using the relationship between the quiver mutation and the cluster tilting mutation, and the similar discussion to Theorem \ref{theorem7}, we get that
$T_{(2,3,6)}=\tau H[-1]\oplus E\langle 3\vec{x}_{3}\rangle \oplus E(3\vec{x}_{3})\oplus E(2\vec{x}_{2}-2\vec{x}_{3})
\oplus E\langle 4\vec{x}_{3}\rangle^{\ast\ast}\oplus E\langle \vec{x}_{2}+\vec{x}_{3}\rangle\oplus E\langle \vec{x}_{2}\rangle(\vec{x}_{3})\oplus \tau^{-1}E[1]\oplus \tau^{-1}E\langle \vec{x}_{3}\rangle[1]\oplus \tau^{-1}E\langle\vec{x}_{2}+2\vec{x}_{3}\rangle^{\ast}[1]$ is a cluster tilting object
in $\crc$, each of whose indecomposable direct summands belongs to $\crd(\alpha^{-1}(\frac{11}{3}), \frac{11}{3}].$ Therefore $T_{(2,3,6)}$  is a tilting object
in $\crd$ with endomorphism algebra (\ref{equation37}), which is a tubular algebra of type $(2,3,6).$
\end{proof}

In the rest of this section, we are devoted to the only left case, $\X$ of weight type $(3,3,3)$.
Since  $T_{\cub}$ is not a cluster tilting object in the cluster category, we try to construct a replacement in $\crd$ such that it is a cluster tilting object, then use the cluster mutation to get what we want.

\begin{theorem}\label{theorem8}
Assume that $\X$ has weight type $(3,3,3)$. Then there exists a tilting object $T_{(3,3,3)}=E\oplus(\bigoplus\limits_{i=1}^{3}E\langle\vec{x}_{i}\rangle)\oplus
(\bigoplus\limits_{i=1}^{3}F_{i}[-1])\oplus G$ in $\crd$, whose endomorphism algebra is a tubular
algebra of type $(3,3,3)$, here $F_{i}$ is determined by
\begin{equation}\label{equation5} 0 \to E\langle2\vec{x}_{i}\rangle\to F_{i}\to \co(\vec{x}_{1}+\vec{x}_{2}+\vec{x}_{3})\to 0
\end{equation}
and $G$ is determined by the exact sequence for each $1\leq i\leq 3$:
\begin{equation}\label{equation6} 0 \to F_{i}\to G[1]\to E\langle\vec{x}_{1}+\vec{x}_{2}+\vec{x}_{3}\rangle(\vec{x}_{i})\to 0.
\end{equation}
\end{theorem}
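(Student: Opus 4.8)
The plan is to follow the pattern of Theorems~\ref{theorem7} and~\ref{theorem24} — exhibit a cluster tilting object in $\crc$, mutate it (guided by Keller's software and Lemmas~\ref{cluster mutation} and~\ref{theorem BIRS}), and pull the result back to a tilting object in $\crd$ via Lemma~\ref{lemma9} — but with one extra preliminary step, since here we cannot start from $T_{\cub}$: by Corollary~\ref{corollary6}(2) the object $T'$, which equals $T_{\cub}$ in $\crc$, is not a tilting object in $\crd$ although all its indecomposable summands lie in the half-open interval category $\crd(0,\alpha(0)]$; hence by Lemma~\ref{lemma9} neither $T'$ nor $T_{\cub}$ is a cluster tilting object in $\crc$. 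So the first task is to construct a genuine cluster tilting object $\tilde T$ in $\crc$ to play the role $T_{\cub}$ plays for the other two weight types.

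For this I would rearrange $T_{\cub}$ using the triangles of Section~3, which are available here because $p_1=p_2=p_3=3>2$: from $E\to E(\overline{x}_i)\to F_i\to E[1]$ of Proposition~\ref{proposition1}(3), its rotation $F_i[-1]\to E\to E(\overline{x}_i)\to F_i$, the sequences~\eqref{equation5}, and Propositions~\ref{proposition2} and~\ref{proposition3} for the top summand, one obtains an object $\tilde T$ with eight pairwise non-isomorphic indecomposable summands — among them the three rank-three bundles $F_i[-1]$ — arranged so that they all lie in a single half-open $\alpha$-interval $\crd(a,\alpha(a)]$. I would then check $\crd(\tilde T,\tilde T[1])=0$ via Lemma~\ref{lemma6}, using the explicit projective covers $PF_i$, injective hulls $IF_i$ and homomorphism spaces supplied by Proposition~\ref{proposition1} and \cite{KLM2}; since moreover these summands can be ordered as a complete exceptional sequence, $\tilde T$ is a tilting object in $\crd$ (as in the proof of Proposition~\ref{proposition5}), hence a cluster tilting object in $\crc$ by Lemma~\ref{lemma9}. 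Along the way I would record the Gabriel quiver of $\End_{\crc}(\tilde T)$.

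Next I would run Keller's quiver-mutation software on $Q_{\tilde T}$; after the appropriate sequence of mutations it reaches the quiver of a tubular algebra of type $(3,3,3)$ — a source and a sink joined by three branches of length three. By Lemmas~\ref{cluster mutation} and~\ref{theorem BIRS} each of these quiver mutations is realized by the cluster-tilting mutation of the current object at the corresponding indecomposable summand, and I would identify the exchange triangles with the distinguished and short exact sequences of Section~3; in particular the last mutation is encoded by~\eqref{equation6}, which for each $i$ reads $F_i\to G[1]\to E\langle\vec{x}_1+\vec{x}_2+\vec{x}_3\rangle(\vec{x}_i)\to F_i[1]$, equivalently $F_i[-1]\to G\to E(\overline{x}_i)\to F_i$ after substituting $E\langle\vec{x}_1+\vec{x}_2+\vec{x}_3\rangle=(E[1])(\vec{\omega})$ from the proof of Corollary~\ref{corollary6}(2); together with~\eqref{equation5} these determine $G$ up to isomorphism. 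The resulting cluster tilting object in $\crc$ is then $T_{(3,3,3)}=E\oplus(\bigoplus_{i=1}^{3}E\langle\vec{x}_i\rangle)\oplus(\bigoplus_{i=1}^{3}F_i[-1])\oplus G$, with endomorphism quiver the mutated quiver.

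Finally I would compute the slopes $\mu E=0$, $\mu E\langle\vec{x}_i\rangle$, $\mu(F_i[-1])=\alpha^{-1}(\mu F_i)$ and $\mu G$ from Riemann--Roch, the tubular factorization property, the resolutions above, and the value $\alpha^{-1}(0)=-\frac{3}{2}$, and verify that they all lie in one half-open interval $(b,\alpha(b)]$; Lemma~\ref{lemma9} then promotes $T_{(3,3,3)}$ to a tilting object in $\crd$, and reading off the commutativity relations — one per branch — identifies $\End_{\crd}(T_{(3,3,3)})$ as the canonical, hence tubular, algebra of type $(3,3,3)$. The main obstacle is the first step: with $T_{\cub}$ unavailable as a starting point, the rigidity of $\tilde T$ must be established by hand, and the delicate part is bounding the $\Hom$ and $\Ext$ groups among the $F_i[-1]$, the $E\langle\vec{x}_i\rangle$, $E$ and the eighth summand via the formulas for $PF_i$ and $IF_i$ in Proposition~\ref{proposition1}; the slope bookkeeping needed to fit the final object into a single $\alpha$-interval is a secondary, though still non-trivial, point.
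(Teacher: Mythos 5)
Your overall scheme (build an intermediate cluster tilting object in $\crc$, mutate it following Keller's software via Lemmas \ref{cluster mutation} and \ref{theorem BIRS}, and return to $\crd$ by Lemma \ref{lemma9}) is the same as the paper's, but the decisive first step is both left undone and based on a misidentification. The rank-three bundles $F_i$ of Theorem \ref{theorem8} are \emph{not} the bundles $F_i$ of Proposition \ref{proposition1}: in type $(3,3,3)$ the latter are determined by $\xi_{i,t}\colon 0\to E(\vec{x}_t)\to F_i\to\co(\overline{x}_i+\overline{x}_t)\to 0$ and have determinant $\vec{c}+\vec{x}_i$, while the $F_i$ of \eqref{equation5} have determinant $\vec{c}+2\vec{x}_i$; they arise instead from Proposition \ref{proposition2} applied to $\vec{x}=\vec{x}_1+\vec{x}_2+\vec{x}_3$. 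Consequently your plan to control $\Hom$ and $\Ext$ among the summands of $\tilde T$ using the projective covers $PF_i$ and injective hulls $IF_i$ computed in Proposition \ref{proposition1} addresses the wrong objects, and the triangle $E\to E(\overline{x}_i)\to F_i\to E[1]$ and its rotation play no role in the actual construction.

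More importantly, the hardest point — producing a cluster tilting object in $\crc$ to start the mutation process — is exactly what you defer (``the rigidity of $\tilde T$ must be established by hand''), whereas the paper has a specific device that avoids any by-hand rigidity check: applying Proposition \ref{proposition2} with $\vec{x}=\vec{x}_1+\vec{x}_2+\vec{x}_3$ and a homotopy pullback, one gets a single new indecomposable $G$ together with a triangle $G\to\bigoplus_{t\neq l}E\langle\vec{x}_t+\vec{x}_l\rangle\xrightarrow{\beta}E\langle\vec{x}_1+\vec{x}_2+\vec{x}_3\rangle\to G[1]$ in which $\beta$ is the minimal right $\add(T_{\cub}\setminus E\langle\vec{x}_1+\vec{x}_2+\vec{x}_3\rangle)$-approximation; hence $\overline{T}=(T_{\cub}\setminus E\langle\vec{x}_1+\vec{x}_2+\vec{x}_3\rangle)\oplus G$ is again a tilting object in $\crd$, and now all its summands lie in the half-open interval $\crd(\alpha^{-1}(1),1]$, so Lemma \ref{lemma9} makes it cluster tilting in $\crc$. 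Only then do the three mutations at the summands $E\langle\vec{x}_j+\vec{x}_k\rangle$ (with exchange triangles $F_i[-1]\to G\to E\langle\vec{x}_j+\vec{x}_k\rangle\to F_i$) produce the $F_i[-1]$ and yield $T_{(3,3,3)}$. In addition, the identification of $G$ by the sequence \eqref{equation6}, which you simply assert, is a substantial computation in the paper (determining $\Coker\varphi_2$ and $\Coker\psi_2$, ruling out decomposability via determinants and injective hulls). As written, your proposal therefore leaves the essential existence, rigidity and identification statements unproved, so it does not yet constitute a proof.
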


\begin{proof}
Firstly, we are going to find a tilting object in $\crd$ such that each indecomposable summand belongs to the interval category
$\crd(a, \alpha(a)]$ for some $a\in \mathbb{Q}$.

By Proposition \ref{proposition2}, there is a triangle in $\crd$
\begin{equation}\label{equation7}  F_{i}[-1]\to E\langle\vec{x}_{i}+\vec{x}_{j}\rangle\oplus E\langle\vec{x}_{i}+\vec{x}_{k}\rangle\to E\langle\vec{x}_{1}+\vec{x}_{2}+\vec{x}_{3}\rangle\to F_{i},
\end{equation}
where $F_{i}$ is determined by the exact sequence
\begin{equation}\label{equation8}  0\to E\langle 2\vec{x}_{i}\rangle\to F_{i}\to \co(\vec{x}_{1}+\vec{x}_{2}+\vec{x}_{3})\to 0.
\end{equation}
Then from the following homotopy pullback commutative diagram induced by (\ref{equation7})
\begin{equation}\label{equation9}
\xymatrix{
F_{i}[-1]\ar[r]\ar@{=}[d]\ar[r]&G_{i}\ar[d]\ar[r] & E\langle\vec{x}_{j}+\vec{x}_{k}\rangle \ar[d]\ar[r]&F_{i}\ar@{=}[d] \\
F_{i}[-1]\ar[r]&E\langle\vec{x}_{i}+\vec{x}_{j}\rangle\oplus E\langle\vec{x}_{i}+\vec{x}_{k}\rangle\ar[r]& E\langle\vec{x}_{1}+\vec{x}_{2}+\vec{x}_{3}\rangle\ar[r]&F_{i},
 }
\end{equation}
we get a triangle in  $\crd$
\begin{equation}\label{equation10}  G_{i}\to \bigoplus\limits_{1\leq t\neq l\leq 3}E\langle \vec{x}_{t}+\vec{x}_{l}\rangle\xrightarrow{\beta}E\langle\vec{x}_{1}+\vec{x}_{2}+\vec{x}_{3}\rangle\to G_{i}[1].
\end{equation}
By the symmetry of the weights, we get that $\beta$ is independent of $i$, then also $G_{i}$, we denote it by $G$. Thus $G$ fits into the following triangle
\begin{equation}\label{equation11}  F_{i}[-1]\to G\to E\langle \vec{x}_{j}+\vec{x}_{k}\rangle\xrightarrow{\varphi_{i}}F_{i}.
\end{equation}

Now we claim that $G$ is determined by (\ref{equation6}). Without loss of generality, we assume that $i=2, j=1$ and $k=3$.
Notice that the nonzero morphism $\varphi_{2}: E\langle\vec{x}_{1}+\vec{x}_{3}\rangle\to F_{2}$ in $\crd$ is injective
in $\coh\X$, which fits into the following exact sequence
\begin{equation}\label{equation12}  0\to E\langle \vec{x}_{1}+\vec{x}_{3}\rangle\xrightarrow{\varphi_{2}}F_{2}\to \Coker \varphi_{2} \to 0.
\end{equation}
Since $E\langle \vec{x}_{1}+\vec{x}_{3}\rangle =E(\vec{c}-\vec{x}_{1}-\vec{x}_{3})=E(\vec{\omega}+\vec{x}_{2})$ and $\Hom(\co(2\vec{\omega}+\vec{x}_{2}), F_{2})\cong k$, we obtain the following commutative diagram
\begin{equation}\label{equation13}
\xymatrix{
0\ar[r]&\co(2\vec{\omega}+\vec{x}_{2})\ar[rd]_{\psi_{2}}\ar[r] & E\langle\vec{x}_{1}+\vec{x}_{3}\rangle \ar[d]^{\varphi_{2}}\ar[r]&\co(\vec{\omega}+\vec{x}_{2})\ar[r]&0. \\
&&F_{2}&&
 }
\end{equation}
Then by the snake lemma, we get an exact sequence
\begin{equation}\label{equation13}  0\to \co( \vec{\omega}+\vec{x}_{2})\to \Coker \psi_{2}\to \Coker \varphi_{2} \to 0.
\end{equation}
For calculation of $\Coker \psi_{2}$, we consider the exact sequence
\begin{equation}\label{equation14}  0\to \co( 2\vec{\omega}+\vec{x}_{2})\xrightarrow{\psi_{2}} F_{2}\to \Coker \psi_{2} \to 0.
\end{equation}
Notice that for any $\vec{x}> 0$, $\Hom(\co( 2\vec{\omega}+\vec{x}_{2}+\vec{x}), F_{2})=0$, we conclude that $\Coker \psi_{2}\in \vect\mathbb{X}$.
Moreover, it is easy to obtain that $[\Coker \psi_{2}]=[F_{2}]-[\co( 2\vec{\omega}+\vec{x}_{2})]=[\co( 2\vec{x}_{1}+\vec{x}_{2})]+[\co( 2\vec{x}_{3}+\vec{x}_{2})]$.
It follows that $\det(\Coker \psi_{2})=2\vec{x}_{1}+2\vec{x}_{2}+2\vec{x}_{3}$.
If $\Coker \psi_{2}$ is indecomposable, then $\mu(\Coker \psi_{2})=2$ implies that it is an Auslander bundle, that is,
$\Coker \psi_{2}=E(\vec{y})$ for some $\vec{y}\in\mathbb{L}$. Then $\det(E(\vec{y}))=\vec{\omega}+2\vec{y}=2\vec{x}_{1}+2\vec{x}_{2}+2\vec{x}_{3}$ implies that
$\vec{y}=\vec{c}$. That is $\Coker \psi_{2}=E(\vec{c})$.
But one can check that $[E(\vec{c})]\neq [\co( 2\vec{x}_{1}+\vec{x}_{2})]+[\co( 2\vec{x}_{3}+\vec{x}_{2})]$, which is a contradiction. Hence, $\Coker \psi_{2}$ is a direct sum of two line bundles.

Recall that $I(E\langle2\vec{x}_{2}\rangle)=\co( 2\vec{x}_{2})\oplus \co( \vec{x}_{2}+2\vec{x}_{3})\oplus \co( 2\vec{x}_{1}+\vec{x}_{2})\oplus \co( \vec{\omega}+\vec{c})$
and all of the direct summands vanish under the functor $\Ext^{1}(\co(\vec{x}_{1}+\vec{x}_{2}+\vec{x}_{3}), -)$ except $\co(2\vec{x}_{2})$.
Thus the exact sequence (\ref{equation8}) induces the following commutative pushout diagram
\begin{equation}\label{equation15}
\xymatrix{
0\ar[r]& E\langle2\vec{x}_{2}\rangle\ar[d]\ar[r] & F_{2}\ar[d]\ar[r]& \co(\vec{x}_{1}+\vec{x}_{2}+\vec{x}_{3})\ar@{=}[d]\ar[r]& 0\\
0\ar[r]&I(E\langle2\vec{x}_{2}\rangle)\ar[r] & E^{\prime}\ar[r]& \co(\vec{x}_{1}+\vec{x}_{2}+\vec{x}_{3})\ar[r]& 0,
 }
\end{equation}
where $E^{\prime}=(I(E\langle2\vec{x}_{2}\rangle)\backslash \co(2\vec{x}_{2}))\oplus(E\langle\vec{x}_{2}\rangle(\vec{x}_{1}+\vec{x}_{3})).$
Hence, we obtain a distinguished exact sequence
\begin{equation}\label{equation16}  0\to E\langle2\vec{x}_{2}\rangle\to F_{2}\oplus \co(2\vec{x}_{2})\to E\langle\vec{x}_{2}\rangle(\vec{x}_{1}+\vec{x}_{3}) \to 0.
\end{equation}
Thus we can easily show that the injective hull of $F_{2}$ has the expression
$I(F_{2})=\co(\vec{x}_{2}+2\vec{x}_{3})\oplus\co(2\vec{x}_{1}+\vec{x}_{2})\oplus\co(\vec{\omega}+\vec{c})\oplus \co(\vec{x}_{1}+\vec{x}_{2}+\vec{x}_{3})\oplus\co(\vec{x}_{1}+2\vec{x}_{2})\oplus\co(2\vec{x}_{2}+\vec{x}_{3}).$
By simple calculation, we find that only two direct summands of $I(F_{2})$, $\co(2\vec{x}_{1}+\vec{x}_{2})$ and $\co(\vec{x}_{2}+2\vec{x}_{3})$,
satisfy that $\det(\co(2\vec{x}_{1}+\vec{x}_{2})\oplus\co(\vec{x}_{2}+2\vec{x}_{3}))=2\vec{x}_{1}+2\vec{x}_{2}+2\vec{x}_{3}$.
It follows that $\Coker \psi_{2}=\co(2\vec{x}_{1}+\vec{x}_{2})\oplus\co(\vec{x}_{2}+2\vec{x}_{3})$.
Hence $\Coker \varphi_{2}$  fits into the following exact sequence
\begin{equation}\label{equation17}  0\to \co(\vec{\omega}+\vec{x}_{2})\to \co(2\vec{x}_{1}+\vec{x}_{2})\oplus\co(\vec{x}_{2}+2\vec{x}_{3})\to \Coker \varphi_{2} \to 0.
\end{equation}
Then the following pushout diagram
\begin{equation}\label{equation17}
\xymatrix{
0\ar[r]& \co(\vec{\omega}+\vec{x}_{2})\ar[d]^{x_{1}\cdot x_{2}}\ar[r]^{x_{2}\cdot x_{3}} & \co(2\vec{x}_{1}+\vec{x}_{2})\ar[d]\ar[r]& S_{2,1}\oplus S_{3,0}\ar@{=}[d]\ar[r]& 0\\
0\ar[r]&\co(\vec{x}_{2}+2\vec{x}_{3})\ar[r] & \co(\vec{c}+\vec{x}_{2})\oplus S_{2,1}\ar[r]&S_{2,1}\oplus S_{3,0}\ar[r]& 0
 }
\end{equation}
implies that $\Coker \varphi_{2}=\co(\vec{c}+\vec{x}_{2})\oplus S_{2,1}.$

Now we consider the following pushout diagram
\begin{equation}\label{equation18}
\xymatrix{
0\ar[r]& E\langle\vec{x}_{1}+\vec{x}_{3}\rangle\ar[d]\ar[r]& F_{2}\ar[d]\ar[r]& \Coker \varphi_{2}\ar@{=}[d]\ar[r]& 0\\
0\ar[r]&I(E\langle\vec{x}_{1}+\vec{x}_{3}\rangle)\ar[r]\ar[d] & F^{\prime}_{2}\ar[d]\ar[r]&\Coker \varphi_{2}\ar[r]& 0.\\
&E\langle\vec{x}_{1}+\vec{x}_{3}\rangle[1]\ar@{=}[r]&E\langle\vec{x}_{1}+\vec{x}_{3}\rangle[1]&&}
\end{equation}
Since each direct summand of $I(E\langle\vec{x}_{1}+\vec{x}_{3}\rangle)$ does not vanish under the functor $\Ext^{1}(\Coker \varphi_{2},-)$.
Hence, $F^{\prime}_{2}$ contains no line bundle summands. Therefore, (\ref{equation18}) induces a triangle in  $\crd$
\begin{equation}\label{equation19}  E\langle\vec{x}_{1}+\vec{x}_{3}\rangle\to F_{2}\to F^{\prime}_{2} \to E\langle\vec{x}_{1}+\vec{x}_{3}\rangle[1].
\end{equation}
Comparing with (\ref{equation11}), we obtain that $G=F^{\prime}_{2}[-1]$.
Moreover, notice that $E\langle\vec{x}_{1}+\vec{x}_{3}\rangle[1]=E\langle\vec{x}_{1}+\vec{x}_{2}+\vec{x}_{3}\rangle(\vec{x}_{2})$,
hence the fact that $\Ext^{1}(E\langle\vec{x}_{1}+\vec{x}_{2}+\vec{x}_{3}\rangle(\vec{x}_{2}), F_{2})\cong k$ implies that $G$ is determined by the following
distinguished exact sequence
\begin{equation}\label{equation20}  0\to F_{2}\to G[1] \to E\langle\vec{x}_{1}+\vec{x}_{2}+\vec{x}_{3}\rangle(\vec{x}_{2})\to 0.
\end{equation}
This finishes the proof of the claim.

Since $\beta$ in (\ref{equation10}) is the right $(\add T_{\cub}\backslash E\langle\vec{x}_{1}+\vec{x}_{2}+\vec{x}_{3}\rangle)$-approximation of $E\langle\vec{x}_{1}+\vec{x}_{2}+\vec{x}_{3}\rangle$. Hence by replacing $E\langle\vec{x}_{1}+\vec{x}_{2}+\vec{x}_{3}\rangle$ by $G$ in the tilting object $T_{\cub}$, we get that $\overline{T}=(T_{\cub}\backslash E\langle\vec{x}_{1}+\vec{x}_{2}+\vec{x}_{3}\rangle)\oplus G$ is a new tilting object in $\crd$ such that each
indecomposable direct summand belongs to $\crd(\alpha^{-1}(1), 1]$. Moreover, the endomorphism algebra $\crd(\overline{T}, \overline{T})$ has the shape
\[\xymatrix{
&E\langle\vec{x}_{1}\rangle\ar@{--}[rr]\ar[rd] & & E\langle\vec{x}_{2}+\vec{x}_{3}\rangle\\
E\ar[ru]\ar[rd]\ar[r]\ar@/^/@{--}[rr]&E\langle\vec{x}_{2}\rangle\ar[r]\ar@{--}@/_/[rr]& G\ar[r]\ar[ru]\ar[rd]&E\langle\vec{x}_{1}+\vec{x}_{3}\rangle\\
&E\langle\vec{x}_{3}\rangle\ar[ru]\ar@{--}[rr]& &E\langle\vec{x}_{1}+\vec{x}_{2}\rangle.\\
 }
\]
Then by Lemma \ref{lemma9}, $\overline{T}$ is a cluster tilting object in $\crc$ such that $\crc(\overline{T},\overline{T})$ has the shape
\[\xymatrix{
&E\langle\vec{x}_{1}\rangle\ar[rd] & & E\langle\vec{x}_{2}+\vec{x}_{3}\rangle\ar[ll]\\
E\ar[ru]\ar[rd]\ar[r]&E\langle\vec{x}_{2}\rangle\ar[r]& G\ar@/_/[ll]\ar[r]\ar[ru]\ar[rd]&E\langle\vec{x}_{1}+\vec{x}_{3}\rangle\ar@/^/[ll]\\
&E\langle\vec{x}_{3}\rangle\ar[ru]& &E\langle\vec{x}_{1}+\vec{x}_{2}\rangle.\ar[ll]\\
 }
\]
Using Keller's soft-ware, we find that by taking quiver mutations $u_{1}u_{2}u_{3}$ for the following quiver
\[\xymatrix{
&\cdot\ar[rd] & & 1\ar[ll]\\
\cdot\ar[ru]\ar[rd]\ar[r]&\cdot\ar[r]& \cdot\ar@/_/[ll]\ar[r]\ar[ru]\ar[rd]&2\ar@/^/[ll]\\
&\cdot\ar[ru]& &3\ar[ll]\\
 }
\]
we obtain the quiver
\begin{equation}\label{equation21}
\xymatrix{
&\cdot\ar[r] & \cdot\ar[rd] & \\
\cdot\ar[ru]\ar[rd]\ar[r]&\cdot\ar[r]& \cdot\ar[r]&\cdot\ar@/_/[lll]\\
&\cdot\ar[r]& \cdot\ar[ru]&\\
 }
\end{equation}

For $1\leq i\leq 3$, the quiver mutation $u_{i}$ corresponds to the following cluster mutation
\begin{equation}\label{equation22}  E\langle\vec{x}_{j}+\vec{x}_{k}\rangle[-1]\to E\langle\vec{x}_{j}+\vec{x}_{k}\rangle^{\ast}\to G \to E\langle\vec{x}_{j}+\vec{x}_{k}\rangle,
\end{equation}
where $E\langle\vec{x}_{j}+\vec{x}_{k}\rangle^{\ast}=F_{i}[-1]$ by (\ref{equation11}). Hence,
$T_{(3,3,3)}=E\oplus(\bigoplus\limits_{i=1}^{3} E\langle\vec{x}_{i}\rangle)\oplus(\bigoplus\limits_{i=1}^{3}F_{i}[-1])\oplus G$ is a cluster tilting object in $\crc$ with each indecomposable direct summand belongs to $\crd(\alpha^{-1}(1), 1].$
Thus by Lemma \ref{lemma9}, $T_{(3,3,3)}$ is a tilting object in $\crd$
with endomorphism algebra:
\begin{equation}\label{equation23}
\xymatrix{
&E\langle\vec{x}_{1}\rangle\ar[r] & F_{1}[-1]\ar[rd] & \\
E\ar[ru]\ar[rd]\ar[r]&E\langle\vec{x}_{2}\rangle\ar[r]& F_{2}[-1]\ar[r]&G,\\
&E\langle\vec{x}_{3}\rangle\ar[r]& F_{3}[-1]\ar[ru]&\\
 }\end{equation}
which is a tubular algebra of type $(3,3,3).$
\end{proof}

\vskip 10pt

\noindent {\bf Acknowledgements.} This work is partially supported by the National Natural Science Foundation of China (Grant Nos. 11571286, 11871404 and 11801473), the Natural Science Foundation of Fujian Province of China (Grant No. 2016J01031)
and the Fundamental Research Funds for the Central Universities of China (Grant Nos. 20720180002 and 20720180006).


\bibliographystyle{amsplain}

\end{document}